\newcommand{\floor}[1]{\lfloor #1 \rfloor}
\newtheorem{lemma}{Lemma}
\newtheorem{theorem}{Theorem}
\theoremstyle{definition}
\begin{document}

\title{On tail inference in scale-free inhomogeneous random graphs}
\author{Daniel Cirkovic$^1$, Tiandong Wang$^2$, Daren B.H. Cline$^1$}
\date{%
    $^1$Department of Statistics, Texas A\&M University, 77843, TX, U.S.A. \\%
    $^2$Shanghai Center for Mathematical Sciences, Fudan University, 200438, Shanghai, China\\[2ex]%
    \today \\[2ex]
}
\maketitle

\begin{abstract}
Both empirical and theoretical investigations of scale-free network models have found that large degrees in a network exert an outsized impact on its structure. However, the tools used to infer the tail behavior of degree distributions in scale-free networks often lack a strong theoretical foundation. In this paper, we introduce a new framework for analyzing the asymptotic distribution of estimators for degree tail indices in scale-free inhomogeneous random graphs. Our framework leverages the relationship between the large weights and large degrees of Norros-Reittu and Chung-Lu random graphs. In particular, we determine a rate for the number of nodes $k(n) \rightarrow \infty$ such that for all $i = 1, \dots, k(n)$, the node with the $i$-th largest weight will have the $i$-th largest degree with high probability. Such alignment of upper-order statistics is then employed to establish the asymptotic normality of three different tail index estimators based on the upper degrees. These results suggest potential applications of the framework to threshold selection and goodness-of-fit testing in scale-free networks, issues that have long challenged the network science community.
\end{abstract}

\medskip

\section{Introduction}

Early research in network science found that many real-world networks exhibit power-law degree distributions \cite{aiello2000random, albert1999diameter, faloutsos1999power, price1965networks}. Loosely speaking, this means that the proportion of nodes with degree exceeding $j$ is approximately proportional to $j^{-\alpha}$ for some $\alpha > 0$ and $j$ sufficiently large. Here, $\alpha$ is called the tail index and networks satisfying this property are commonly referred to as ``scale-free". The prominence of scale-free networks led to the development of models that could imitate such behavior, which in turn allowed researchers to study the recurring consequences that the scale-free property had on a variety of network characteristics. Prominent examples of network models that emerged during this time are the classes of preferential attachment and inhomogeneous random graphs (IRGs) (see for instance \cite{barabasi1999emergence, chung2002average}).

Despite the interest in the scale-free property and, by extension, the large degrees of random graphs, statistical inference procedures focused on the extremal behavior of the degree distribution are underdeveloped. Often, large degrees in a network are the most impactful on its structure. Empirical investigations have found that the connectivity of scale-free networks is vulnerable to the deletion of targeted high degree vertices \cite{albert2000error}. Theoretical studies of popular scale-free network models also confirm the outsized influence large degree nodes have on connectivity \cite{bhamidi2021multiscale}. Hence, in order to make reliable inference on a wide variety of network properties, network scientists require provably accurate statistical procedures that describe the tail behavior of degree distributions.  

When examining the large degrees of scale-free networks, the foremost object of interest is the power-law tail index $\alpha$. Yet, only recently has the Hill estimator \cite{hill1975simple}, a popular estimator of the tail index, been shown to be consistent in the preferential attachment and inhomogeneous random graph models \cite{bhattacharjee2022large, cirkovic2024emergence, wang2019consistency}. To the best of our knowledge, there are no results confirming asymptotic normality of this or any other tail index estimator in scale-free random networks. Furthermore, thresholding methods that estimate the value beyond which a power-law tail fits best, such as the minimum distance method, have no theoretical basis for application in scale-free networks \cite{clauset2009power, drees2020minimum}. Such lack of tools for assessing the goodness of fit has even provoked controversies regarding the applicability of the scale-free model in network science \cite{broido2019scale, holme2019rare, voitalov2019scale}. 

In this paper, we take a vital first step towards providing valid statistical inference for the extreme-value behavior of large degrees in scale-free random networks. In particular, we provide a framework for proving the asymptotic normality of tail index estimators in scale-free IRGs. Inhomogeneous random graphs assign to each node an independently and identically distributed weight that models its degree. It is well-known that the node degrees concentrate strongly around their weights when the weights are large. This dynamic causes the degree distribution to adopt the tail behavior of the weight distribution \cite{bhattacharjee2022large}. Our framework leverages this relationship between the weights and degrees, but interrogates it further. In particular, for the Norros-Reittu and Chung-Lu random graph models, we determine a rate for the number of nodes $k(n) \rightarrow \infty$ such that, with probability tending towards 1, the node with the $i$-th largest weight will have the $i$-th largest degree for all $i \in \{1, \dots, k(n) \}$. We refer to this phenomenon as the \emph{alignment of upper order statistics}. Under the alignment of upper order statistics, we prove that when applied to the degrees of inhomogeneous random graphs, three different tail index estimators can be approximated by their weight-based counterparts. This approximation gives rise to the asymptotic normality of tail index estimators when applied to IRGs, the first such result for scale-free networks. 
The alignment of upper order statistics thus provides a potential avenue for future work that verifies the validity of multiple extreme-value procedures employing the upper degrees of inhomogeneous random graphs.   

The rest of the paper is organized as follows. In Section~\ref{sec:IRG}, we start by discussing important concepts in IRGs, and give conditions under which our asymptotic results will be derived. We then summarize the key finding, i.e. the alignment of upper order statistics, in Section~\ref{sec:orderstat}, and apply it to three different tail index estimators in Section~\ref{sec:application}. Simulation results and concluding remarks are given in Sections~\ref{sec:sim} and \ref{sec:conclusion}, respectively, and Section~\ref{sec:proof} collects all technical proofs.

\subsection{Notation}

For any $n \in \mathbb{N}$, we employ the notation $[n]$ to refer to the set $\{1, \dots, n\}$. If $X_1, X_2, \dots, X_n$ are a collection of random variables, we let $X_{[n]} = \{X_1, X_2, \dots, X_n\}$. For real numbers $x$ and $y$, we let $x \wedge y$ and $x \vee y$ denote their minimum and maximum, respectively.  

\section{Scale-free inhomogeneous random network models}\label{sec:IRG}

\subsection{Ordered inhomogeneous random network}

Inhomogeneous random graph models assign each node a latent weight that confers some notion of connectivity or popularity relative to other nodes in the network. A larger weight is then typically associated with a greater number of links. In order to explicitly analyze the extent to which a large weight confers a large degree, we consider \emph{ordered} inhomogeneous random network models. Instead of assigning identically distributed weights to each node independently, the ordered inhomogeneous network matches the label of each node with the rank of its allocated weight. In said scheme, the largest weight is always assigned to node one, the second largest to node two and so on. Such an ordering is often imposed when the weights are deterministic (see for example \cite{bhamidi2012novel, van2017random}). Assuming that the weights are drawn from a continuous distribution, then an inhomogeneous random network may be recovered from an ordered version by selecting a permutation of the node labels uniformly at random. As such, the typical degree distribution from an ordered inhomogeneous network is the same as its unordered counterpart. 

We now formally introduce the two ordered inhomogeneous random network models under consideration. Suppose $W_{[n]}$ are independent and identically distributed weights from a positively supported continuous distribution function $F$ and let $W_{(1)}(n) \geq W_{(2)}(n) \geq \dots \geq W_{(n)}(n)$ denote the associated decreasing order statistics. Further, let $L(n) = \sum_{i = 1}^n W_{(i)}(n) = \sum_{i = 1}^n W_i$. Both random networks model the symmetric adjacency matrix $A(n) = \{A_{ij}(n)\}_{i, j = 1}^n$ by conditioning on the latent weights $\{W_{(i)}(n)\}_{i \in [n]}$. The first model under consideration is the Norros-Reittu multigraph which assumes that 
\begin{align}
\label{eq:NR}
A_{ij}(n) \mid \{W_{(i)}(n)\}_{i \in [n]} \overset{\text{ind}}{\sim} \text{Poisson}\left( \frac{W_{(i)}(n)W_{(j)}(n)}{L(n)} \right), \qquad \text{for } 1 \leq i \leq j \leq n.
\end{align}
Let $D_i(n) = \sum_{j = 1}^n A_{ij}(n)$ denote the degree of node $i$ for $i \in [n]$. From \eqref{eq:NR}, we have that
\begin{align}
\label{eq:d}
D_i(n) \mid  \{W_{(i)}(n)\}_{i \in [n]}  \sim \text{Poisson}\left( W_{(i)}(n)\right), \qquad \text{for } i \in [n],
\end{align}
since $D_i(n)$ is the sum of conditionally independent Poisson random variables. We secondly consider the Chung-Lu model which assumes that
\begin{align}
\label{eq:CL}
A_{ij}(n) \mid \{W_{(i)}(n)\}_{i \in [n]} \overset{\text{ind}}{\sim} \text{Bernoulli}\left( \frac{W_{(i)}(n)W_{(j)}(n)}{L(n)} \wedge 1 \right), \qquad \text{for } 1 \leq i \leq j \leq n.
\end{align}
Under \eqref{eq:CL}, note that
\begin{align*}
\mathbb{E}\left[D_i(n) | \{W_{(i)}(n)\}_{i \in [n]} \right] = \sum_{j = 1}^n \left( \frac{W_{(i)}(n)W_{(j)}(n)}{L(n)} \wedge 1 \right), \qquad i \in [n].
\end{align*}
As mentioned earlier, despite working in an ordered model, inhomogeneous random graphs may be recovered by taking a random permutation of the node labels. 
\begin{lemma}
\label{lem:perm}
Suppose $\pi$ is a permutation drawn uniformly at random from $S_n$, the set of all permutations on $[n]$. Define for every $i, j \in [n]$ such that $i \leq j$, $\tilde{A}_{ij}(n) = A_{\pi(i)\pi(j)}(n)$. Then $\{\tilde{A}_{ij}(n) \}_{1 \leq i \leq j \leq n}$ is distributed according to an inhomogeneous random graph model.
\end{lemma}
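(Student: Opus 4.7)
The plan is to exhibit a canonical relabeling, derived from the data itself, that manifestly converts the ordered model into an unordered IRG, and then to argue that an externally drawn uniform permutation is distributionally interchangeable with this canonical one.

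First, since $F$ is continuous, with probability one the weights $W_1,\dots,W_n$ are all distinct, so there is an a.s.\ unique random permutation $\rho\in S_n$ such that $W_{(i)}(n)=W_{\rho^{-1}(i)}$ for every $i\in[n]$; equivalently, $\rho(j)$ is the rank (largest$=1$) of $W_j$. By exchangeability of the i.i.d.\ sample $W_{[n]}$, two standard facts about ranks hold: (a) $\rho$ is uniformly distributed on $S_n$, and (b) $\rho$ is independent of the vector of order statistics $(W_{(1)}(n),\dots,W_{(n)}(n))$.

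Second, I would observe that the ordered adjacency matrix $A(n)$ is a measurable function of the order statistics together with an independent source of randomness (the conditionally independent Poissons in \eqref{eq:NR} or Bernoullis in \eqref{eq:CL}). Combined with (b), this yields independence of $A(n)$ and $\rho$. Now define $B_{ij}(n):=A_{\rho(i)\rho(j)}(n)$. Conditionally on $W_{[n]}$, each $B_{ij}$ is Poisson (resp.\ Bernoulli) with parameter
\[
\frac{W_{(\rho(i))}(n)\,W_{(\rho(j))}(n)}{L(n)}\ \bigl(\wedge\,1\bigr)
\;=\;\frac{W_i W_j}{L(n)}\ \bigl(\wedge\,1\bigr),
\]
using $W_{(\rho(i))}(n)=W_i$, and these variables remain conditionally independent across $1\le i\le j\le n$ because the relabeling is a bijection applied to an otherwise conditionally independent family. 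This is precisely the definition of an (unordered) Norros--Reittu or Chung--Lu inhomogeneous random graph on the i.i.d.\ weights $W_1,\dots,W_n$.

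Third, to replace $\rho$ by the externally drawn uniform $\pi$, I would use the coupling argument: because $\pi$ is uniform on $S_n$ and independent of everything, while $\rho$ is also uniform on $S_n$ and independent of $A(n)$ by the previous step, the pairs $(A(n),\pi)$ and $(A(n),\rho)$ have the same joint distribution. Applying the measurable map $(M,\tau)\mapsto\{M_{\tau(i)\tau(j)}\}_{i\le j}$ gives $\{\tilde A_{ij}(n)\}_{i\le j}=\{A_{\pi(i)\pi(j)}(n)\}_{i\le j}\stackrel{d}{=}\{A_{\rho(i)\rho(j)}(n)\}_{i\le j}=\{B_{ij}(n)\}_{i\le j}$, which is IRG-distributed.

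I do not anticipate a real obstacle; the one subtlety deserving explicit care is disentangling the two permutations. The argument hinges on the fact that $A(n)$ depends on the weights only through their sorted values, so the data-determined permutation $\rho$ is independent of $A(n)$; without this, substituting the uniform $\pi$ would not preserve the joint law. The continuity of $F$ is what guarantees that $\rho$ exists and is uniformly distributed, and should be invoked once at the outset.
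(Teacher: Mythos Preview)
Your argument is correct. It rests on the same probabilistic fact as the paper's proof---that for an i.i.d.\ sample from a continuous law, the ranks are uniform on $S_n$ and independent of the order statistics---but you deploy it differently. The paper computes the mass function of $\tilde A$ directly: it averages $p(\{a_{ij}\}\mid W_{(\sigma(1))},\dots,W_{(\sigma(n))})$ over $\sigma\in S_n$ and then recognizes this average as $\mathbb{E}[p(\{a_{ij}\}\mid W_1,\dots,W_n)\mid W_{(1)},\dots,W_{(n)}]$, since the conditional law of $(W_1,\dots,W_n)$ given the order statistics is uniform over permutations. You instead build the canonical rank permutation $\rho$, verify that $B_{ij}=A_{\rho(i)\rho(j)}$ is manifestly an unordered IRG, and then swap $\rho$ for the external $\pi$ via equality of the joint laws $(A(n),\pi)\stackrel{d}{=}(A(n),\rho)$. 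Your route is more structural and avoids writing down explicit mass functions; the paper's route is a short direct calculation. The one point you rightly flag---that $A(n)$ depends on the weights only through their sorted values, hence is independent of $\rho$---is exactly the hinge of your argument and should be stated crisply in the final write-up.
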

As such, the ordered and unordered models share the same typical degree distribution. Lemma \ref{lem:perm} is proved in Section \ref{sec:perm}. We next investigate the scale-free nature of inhomogeneous random networks.

\subsection{Scale-free property and order statistics}\label{sec:scalefree}

For random networks, the scale-free property is most generally defined through regular variation of the \textit{asymptotic} degree distribution \cite{holme2019rare, naulet2021bootstrap, voitalov2019scale}. That is, we say a random network is scale-free if for every $k \in \mathbb{Z}_+$
\begin{align}
\label{eq:asymp_deg}
\frac{1}{n}\sum_{i = 1}^n \mathbb{P}\left(D_i(n) = k \right) \rightarrow \ \mathbb{P}\left(\mathcal{D} = k \right), \qquad \text{as } n \rightarrow \infty, 
\end{align}
for some random variable $\mathcal{D}$ with regularly varying tail. That is,
\begin{align}
\label{eq:rv_deg}
\mathbb{P}\left(\mathcal{D} > k \right) = l(k) k^{-\alpha}, \qquad k \in \mathbb{Z}_+,
\end{align}
for some slowly varying function $l$ and $\alpha > 0$. It is well-known that, for a wide class of inhomogeneous random graphs, regular variation of the weight distribution implies regular variation of the asymptotic degree distribution \cite{bhattacharjee2022large, cirkovic2024emergence}. In particular, for the Norros-Reittu random network, regular variation of the weight distribution implies regular variation of the typical degree distribution for all $\alpha >0$ \cite{bhattacharjee2022large}. For the Chung-Lu model, regular variation of the weight distribution implies regular variation of the asymptotic degree distribution when $\alpha >1$ \cite{bollobas2007phase}. In both settings, the asymptotic degree distribution has a mixed-Poisson form where
\begin{align*}
\mathbb{P}\left(\mathcal{D} = k \right) = \int_0^\infty \frac{e^{-w}w^k}{k!} \mathbb{P}\left(W_1 \in dw \right), \qquad k \in \mathbb{Z}_+.  
\end{align*}
Regular variation of the asymptotic degree distribution emerges from the weights due to the strong concentration of the Poisson distribution for large rates. Obtaining rates at which the upper degrees concentrate around large weights will later play a large role in quantifying how the regular variation property translates from the weights to the degrees. 

We henceforth assume that the distribution of $W_1$ is regularly varying with tail index $\alpha > 0$. For the Chung-Lu random graph, we will restrict the tail index to $\alpha > 2$. When working with the order statistics $W_{(1)}(n), \dots, W_{(n)}(n)$, regular variation is often more conveniently expressed in terms of the quantile function. Let $U(t) = F^{\leftarrow}(1 - 1/t)$ for $t \geq 1$ where $F^\leftarrow(y) = \inf\{x : F(x) \geq y \}$. In addition, let $\gamma = 1/\alpha$. According to \cite{resnick2007heavy}, Remark 3.3, regular variation of $W_1$ with tail index $\alpha > 0$ is then equivalent to the statement
\begin{align}
\label{eq:rv}
\lim_{t \rightarrow \infty}\frac{U(tx)}{U(t)} = x^{\gamma}, \qquad x > 0.
\end{align}
Hence, we may write $U(t) = x^{\gamma}\ell(t)$ for some slowly varying function $\ell$. Further, using R\'enyi's representation for exponential order statistics, we have the following distributional characterization for the order statistics:
\begin{align}
\label{eq:orddist}
\left(W_{(1)}(n), W_{2}(n), \dots, W_{(n)}(n)\right) \overset{d}{=} \left( U\left(e^{\sum_{j = 1}^n \frac{E_j}{j}}\right), U\left(e^{\sum_{j = 2}^n \frac{E_j}{j}}\right), \dots, U\left(e^{\frac{E_n}{n}}\right) \right),
\end{align}
where $E_1, E_2, \dots, E_n$ are independent unit rate exponential random variables \cite[Section 4.4]{beirlant2006statistics}. 

In order to achieve asymptotic normality of most tail index estimators based on the upper order statistics, it is common to impose a second order condition on $U$ \cite{de1996generalized}. The second order condition assumes that there exists a $\rho \leq 0$ and a function $A$ with constant sign such that for $x > 0$
\begin{align}
\label{eq:2rv}
\lim_{t \rightarrow \infty} \frac{\frac{U(tx)}{U(t)} - x^{\gamma}}{A(t)} = x^\gamma \frac{x^\rho - 1}{\rho}. 
\end{align}
When $\rho = 0$, the limiting function in \eqref{eq:2rv} is understood to be $x^\gamma \log x$. The second order condition provides a rate of convergence for \eqref{eq:rv}. In addition, when $\rho < 0$, the first order relationship is simplified to $U(t) \sim c t^{\gamma}$ where $c = \lim_{t \rightarrow \infty} t^{-\gamma}U(t)$ \cite{haan2006extreme}. The second order condition may be equivalently restated in terms of the survival function \cite[Theorem 2.3.9]{haan2006extreme}, and \eqref{eq:2rv} can be seen to hold for the Fr\'echet distribution with $\rho = -1$, the t-distribution with $\rho = -2$, and the Burr distribution with $1 - F(x) = (1 + x^{-\rho/\gamma})^{1/\rho}$, $\rho < 0$, among other distributions.

We further assume that there exists a $t_0 > 0$ and $\eta \in [0, 1)$ such that for all $t \geq t_0$ and $x \geq 1$
\begin{align}
\label{eq:Ulower}
\frac{U(tx)}{U(t)} \geq (1 - \eta)x^{\gamma} + \eta.
\end{align}
Hence taking $\eta = 0$ in \eqref{eq:Ulower} is equivalent to stating that $\ell$ is eventually non-decreasing. For $\eta \in (0, 1)$, \eqref{eq:Ulower} allows $\ell$ to be  non-increasing, but ensures that $\lim_{x \rightarrow \infty} \ell(x) \geq (1 - \eta)\ell(t_0)$. Thus, in conjunction with the second order condition with $\rho < 0$, \eqref{eq:Ulower} is not very restrictive. In terms of the tails of $F$, \eqref{eq:Ulower} implies that for all $z \geq U(t_0)$ and $x \geq 1$
\begin{align}
\label{eq:Tlower}
\frac{1 - F(zx)}{1 - F(z)} \geq \left(\frac{x - \eta}{1 - \eta}\right)^{-\alpha}.
\end{align} 
For $\eta = 0$, the condition \eqref{eq:Ulower} naturally holds for the Pareto distribution and it can analytically be shown to hold for the half-Cauchy distribution, the Fr\'echet distribution and the Burr distribution. Reminiscent of the distributions considered in \cite{hall1984best, hall1985adaptive}, the tail function $1 - F(x) = \frac{2+x}{x^2}$ for $x \geq 2$ satisfies \eqref{eq:Tlower} for, say, $\eta = 1/4$. Condition \eqref{eq:Ulower} will lead to a lower bound for the spacings of consecutive order statistics, an important ingredient in our proof strategy.

\section{Alignment of upper order statistics}\label{sec:orderstat}

For inhomogeneous random networks, it is well understood that a large node weight typically confers a large degree. As mentioned in Section \ref{sec:scalefree}, this is due to the strong concentration of the degrees around the weights, conditional on the weights being large. Hence, we expect the largest weights to play a fundamental role in determining the upper degrees. In this section, we find that the large weights not only determine the upper degrees, but that the ordering of the large weights governs the ordering of the upper degrees. Let $R_i(n) = \sum_{j = 1}^n 1_{\{ D_i(n) \leq D_j(n) \}}$ denote the number of nodes with degree at least as large as the degree of node $i$. For scale-free inhomogeneous networks we establish a growth rate on the number of nodes, $k(n) \rightarrow \infty$, such that with probability tending towards $1$ as $n \rightarrow \infty$, we have
\begin{align}
\label{eq:ord_aling}
\left(R_1(n), R_2(n), \dots,R_{k(n)}(n)\right) = \left(1, 2, \dots, k(n) \right).
\end{align} 
Our proof also shows that, with probability tending towards $1$, no ties occur among the upper $k(n)$ degrees so that we indeed find the that the ordering of the upper $k(n)$ weights is preserved by the ordering of the upper $k(n)$ degrees. We refer to the statement \eqref{eq:ord_aling} as the alignment of upper order statistics. The requirement on $k(n)$ is presented in Theorem \ref{thm:order}.

\begin{theorem}
\label{thm:order}
Assume that $U$ satisfies \eqref{eq:rv} and \eqref{eq:Ulower}.
Suppose that the network is generated under either a Norros-Reittu model with $\alpha > 0$ or Chung-Lu model with $\alpha > 2$. Further suppose $k(n)$ is such that $k(n) \rightarrow \infty$ and $k(n) \log^{\alpha}(n) /n^{\frac{1}{4\alpha + 1}} \rightarrow 0$, as $n \rightarrow \infty$. Then as $n \rightarrow \infty$,
\begin{align*}
\mathbb{P}\left(\left(R_1(n), R_2(n), \dots,R_{k(n)}(n)\right) = \left(1, 2, \dots, k(n) \right) \right) \rightarrow 1.
\end{align*}
\end{theorem}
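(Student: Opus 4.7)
The plan is to condition on the weight order statistics and compare, event by event, the consecutive spacings of the top weights with the conditional fluctuations of the degrees. If the smallest spacing among the top $k(n)+1$ weights dominates the largest fluctuation $|D_i(n) - W_{(i)}(n)|$, then for every $i \leq k(n)$ we have $D_i(n) > D_{i+1}(n)$; applied across the boundary at rank $k(n)$, the same comparison shows that no node $j > k(n)$ can overtake, which together yields \eqref{eq:ord_aling} and the absence of ties.

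For the spacings, I would exploit the R\'enyi representation \eqref{eq:orddist}, writing $W_{(i)}(n) = U(e^{S_i})$ with $S_i = \sum_{j=i}^n E_j/j$. Condition \eqref{eq:Ulower} applied with $t = e^{S_{i+1}}$ and $x = e^{E_i/i}$ yields
\begin{align*}
W_{(i)}(n) - W_{(i+1)}(n) \;\geq\; (1-\eta)\,W_{(i+1)}(n)\,\bigl(e^{\gamma E_i/i} - 1\bigr).
\end{align*}
By Bernstein's inequality for centered sums of $E_j/j$, $S_{i+1}$ concentrates around $\log(n/(i+1))$, so regular variation of $U$ gives $W_{(i+1)}(n) \geq c\,(n/i)^{\gamma}$ uniformly in $i \leq k(n)$ with probability $1-o(1)$. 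Meanwhile, a union bound delivers $\min_{i \leq k(n)} E_i \geq 1/(k(n) \log n)$ with probability $1 - o(1)$, so on a high-probability event the minimum consecutive spacing among the top $k(n)+1$ weights is at least of order $(n/k(n))^{\gamma}/(k(n)^{2}\log n)$.

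For the concentration, the Norros--Reittu case follows directly from \eqref{eq:d}: Bernstein for Poisson variables together with a union bound over $[n]$ yields $|D_i(n) - W_{(i)}(n)| = O(\sqrt{W_{(i)}(n)\log n} + \log n)$ uniformly in $i$ on a high-probability event. For Chung--Lu with $\alpha > 2$, $L(n)/n \to \mathbb{E}[W_1]$ almost surely and $W_{(i)}(n)^2 = o(L(n))$ uniformly for $i \leq k(n)$, so the bias between the conditional mean of $D_i(n)$ and $W_{(i)}(n)$ is negligible and the truncation in \eqref{eq:CL} is vacuous on our event; a Chernoff bound for sums of independent Bernoullis then delivers the same concentration rate. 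Intersecting the two events, I would conclude
\begin{align*}
D_i(n) - D_{i+1}(n) \;\geq\; (W_{(i)}(n) - W_{(i+1)}(n)) - O\bigl(\sqrt{W_{(k(n))}(n)\log n}\bigr) \;>\; 0
\end{align*}
as soon as $k(n) = o(n^{1/(4\alpha+1)}/\log^{\alpha}(n))$; the analogous comparison with nodes $j > k(n)$, whose maximum degree exceeds $W_{(k(n)+1)}(n)$ by at most the same fluctuation, handles the boundary.

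The main obstacle is Step~1. The leading factor $t^{\gamma}$ in the regularly varying $U$ would by itself deliver spacings of order $W_{(i+1)}(n)\cdot \gamma E_i/i$, but the slowly varying factor $\ell$ in $U(t) = t^{\gamma}\ell(t)$ can shrink consecutive ratios and invalidate such a naive heuristic; condition \eqref{eq:Ulower} is precisely what circumvents this by enforcing a pointwise-in-$t$ lower bound on $U(tx)/U(t)$ rather than a merely asymptotic one. Balancing the resulting spacing $(n/k(n))^{\gamma}/(k(n)^2 \log n)$ against the degree fluctuation $\sqrt{W_{(k(n))}(n) \log n} \asymp (n/k(n))^{\gamma/2}\sqrt{\log n}$ is what produces the exponent $1/(4\alpha+1)$ in the hypothesis, with $\log^{\alpha}$ absorbing the slowly-varying and Bernstein logarithmic factors.
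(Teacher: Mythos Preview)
Your three–step architecture (spacings via \eqref{eq:Ulower} and R\'enyi, Poisson/Chernoff concentration, then a boundary argument for $j>k(n)$) is exactly the paper's, and Steps~2 and~3 match Lemmas~\ref{lem:d_conc}, \ref{lem:CL_conc}, \ref{lem:d_upper}, \ref{lem:M} essentially as you describe.

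The gap is in how you combine Steps~1 and~2. In the displayed line
\[
D_i(n)-D_{i+1}(n)\;\geq\;(W_{(i)}(n)-W_{(i+1)}(n))-O\bigl(\sqrt{W_{(k(n))}(n)\log n}\bigr),
\]
the error term is wrong: the fluctuation of $D_i(n)$ is $O(\sqrt{W_{(i)}(n)\log n})$, which for small $i$ is of order $n^{\gamma/2}\sqrt{\log n}$, not $(n/k(n))^{\gamma/2}\sqrt{\log n}$. Once you correct this, your uniform minimum-spacing bound $(n/k(n))^{\gamma}/(k(n)^{2}\log n)$ no longer dominates: comparing the \emph{minimum} spacing against the \emph{maximum} fluctuation only yields $k(n)=o(n^{1/(4\alpha+2)})$, strictly weaker than the stated $n^{1/(4\alpha+1)}$. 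Your final paragraph recovers the exponent $1/(4\alpha+1)$ only because you balance the minimum spacing against the \emph{minimum} fluctuation, which is not the inequality you need.

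The fix, and what the paper does in Lemma~\ref{lem:w_repel}, is to keep the comparison index-by-index rather than passing to a global minimum: for each $i\le k(n)$ separately, bound $\mathbb{P}\bigl(W_{(i)}(n)-W_{(i+1)}(n)\le 2\sqrt{5\log(n)W_{(i)}(n)}\bigr)$ by reducing via \eqref{eq:Ulower} to $\mathbb{P}\bigl(1-e^{-E_i/(\alpha i)}< c\,\sqrt{\log(n)/W_{(i)}(n)}\bigr)\approx \alpha i\sqrt{\log(n)(i/n)^{\gamma}}$, and then sum over $i$. The sum is $O\bigl(k(n)^{2+\gamma/2}\sqrt{\log n}/n^{\gamma/2}\bigr)$, which vanishes precisely when $k(n)=o(n^{1/(4\alpha+1)}/\mathrm{polylog})$. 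Your ingredients (the spacing lower bound from \eqref{eq:Ulower}, the $(n/i)^{\gamma}$ estimate for $W_{(i+1)}(n)$) are correct; you just cannot collapse them to a single global minimum before comparing.
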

Theorem \ref{thm:order} is proved in Section \ref{sec:ord_proof}. It is pursued in three steps. First, we establish that the spacings between the upper $k(n)$ order statistics of regularly varying random variables are sufficiently large. This is done by establishing high probability upper and lower bounds on the upper order statistics of regularly varying random variables. Then we show that the first $k(n)$ degrees concentrate around their weights at a sufficiently fast rate. In fact, this rate determines how far apart the upper order statistics need to be, which in turn determines how fast $k(n)$ is allowed to grow. The main technical ingredients here are Bennett's inequality for the Norros-Reittu model and Chernoff's inequality for the Chung-Lu random graph. Assuming that the first and second steps are achieved, one may determine that
\begin{align*}
D_1(n) > D_2(n) > \dots > D_{k(n)}(n),
\end{align*}
with probability tending towards one as $n \rightarrow \infty$. Hence, the final step required to prove the alignment of the upper order statistics is to show that, with probability tending towards one, none of the degrees $\{D_i(n)\}_{i \in [n]\setminus [k(n)]}$ exceed $D_{k(n)}(n)$. This sequence of steps can be more precisely outlined by showing that each of the following sequences of events,
\begin{equation}
\begin{split}
S(n) =& \left\lbrace \forall i \in [k(n)],  W_{(i)}(n) - W_{(i + 1)}(n) >  2\sqrt{5\log (n) W_{(i)}(n)} \right\rbrace, \\
C(n) =& \left\lbrace \forall i \in [k(n)],  \left| D_i(n) - W_{(i)}(n) \right| < \sqrt{5\log (n) W_{(i)}(n)}  \right\rbrace,  \\
M(n) =& \left\lbrace D_{k(n)}(n) >  \max_{i \in [n] \setminus [k(n)]} D_i(n)  \right\rbrace.
\end{split}
\end{equation}
occur with probability tending towards one as $n \rightarrow \infty$. That is, $S(n)$, $C(n)$ and $M(n)$ refer to the events described in the first, second and final step, respectively. Theory regarding the events $S(n)$ and $C(n)$ can be found in Sections \ref{sec:space_proof} and \ref{sec:conc_proof}, respectively. Statements for $M(n)$ are developed in Section \ref{sec:ord_proof}.

\section{Application to tail index estimators}\label{sec:application}

In the statistical inference of scale-free networks, a fundamental task is the estimation of the tail index $\alpha$. However, for network data in particular, this is a difficult problem. Theoretical justification for procedures used to estimate the tail index are difficult to obtain due to the degree dependence present in network data. In fact, only consistency of the well-known Hill estimator for the power-law tail index has been verified in two non-trivial network models: the preferential attachment model \cite{wang2019consistency} and the inhomogeneous random graph \cite{bhattacharjee2022large}. To the best of our knowledge, the current literature contains no results regarding the asymptotic normality of \textit{any} tail index estimator for network data.  Further, a variety of practical difficulties also emerge when applying tail index estimators to real-world network data \cite{voitalov2019scale}. For example, ties in integer-valued data often lead to the erratic behavior of tail index estimators \cite{matsui2013estimation}. 

In this section, we find that the alignment of upper order statistics also provides a path to proving the asymptotic normality of tail index estimators when applied to degrees from the Norros-Reittu and Chung-Lu random graphs. Although we focus on three estimators, the presented techniques can in principle be applied to most tail index estimators based on the largest degrees of an inhomogeneous random graph. The three estimators under consideration are the Hill \cite{hill1975simple}, Pickands \cite{pickands1975statistical} and probability weighted moment (PWM) estimators \cite{hosking1987parameter}. All three estimators target the reciprocal of the tail index, $\gamma \equiv 1/\alpha$. In the iid setting, asymptotic normality has been verified for the Hill and Pickands estimators when $\alpha > 0$ and for the PWM estimator when $\alpha > 2$. In all three cases, asymptotic normality relies on the second-order condition for $U$ presented in Section \ref{sec:scalefree}; see Chapter 3 of \cite{haan2006extreme} for detailed discussions. 

Our strategy to prove asymptotic normality of these estimators when applied to the network degrees is to approximate them by their weight-based counterparts. We denote a generic tail index estimator based on the degrees by $\hat{\gamma}_D(n)$ and its weight-based counterpart by $\hat{\gamma}_W(n)$. In all three cases, the upper-order statistic alignment allows us to write $\hat{\gamma}_D(n)$ as a function of the degrees $D_{1}(n), D_2(n), \dots$ rather than the order statistics. From there, an approximation by $\hat{\gamma}_W(n)$ is obtained in the ordered inhomogeneous random graph model.

\subsection{Hill and Pickands estimators}

In this section, we prove an approximation of the degree-based Hill and Pickands estimators by their weight-based counterparts. The Hill estimator based on the upper $k(n)$ degrees is given by
\begin{align}
\label{eq:hilld}
\hat{\gamma}^\text{Hill}_D(n) = \frac{1}{k(n) - 1}\sum_{i = 1}^{k(n) - 1}\log\left(D_{(i)}(n)/D_{(k(n))}(n) \right).
\end{align}
The weight-based Hill estimator, $\hat{\gamma}^\text{Hill}_W(n)$, is defined by replacing $D_{(i)}(n)$ in \eqref{eq:hilld} with $W_{(i)}(n)$ for $i \in [k(n)]$. The Pickands estimator based on the upper $4k(n)$ degrees is given by
\begin{align*}
\hat{\gamma}^\text{Pick}_D(n) = \frac{1}{\log 2} \log\left( \frac{D_{(k(n))}(n) - D_{(2k(n))}(n)}{D_{(2k(n))}(n) - D_{(4k(n))}(n)}  \right),
\end{align*}
with its weight-based counterpart defined similarly. In either case, the goal is to prove a statement of the form 
\begin{align}
\label{eq:hill_approx}
\sqrt{k(n) }\left|\hat{\gamma}_D(n) - \hat{\gamma}_W(n) \right| \xrightarrow{p} 0, \qquad \text{as } n \rightarrow \infty.
\end{align} 
For the Hill and Pickands estimators the approximation is handled similarly since both involve a difference of logarithms. A formal statement of \eqref{eq:hill_approx} is given in Lemma \ref{lem:hill_approx}. Asymptotic normality of the centered and scaled tail index estimators is again ensured through the second order condition on $U$. That is, by assuming \eqref{eq:2rv} and that $k(n)A(n/k_n) \rightarrow 0$, we have that for $\ell \in \{\text{Hill}, \text{Pick} \}$
\begin{align}
\label{eq:asympnormHP}
\sqrt{k(n)}\left( \hat{\gamma}^\ell_W(n) - \gamma \right) \Rightarrow N\left(0, \tau^{\ell} \right)
\end{align}
as $n \rightarrow \infty$ where $\tau^{\text{Hill}} = \gamma^2$ and 
\begin{align*}
\tau^{\text{Pick}} = \frac{\gamma^2(2^{2\gamma + 1} + 1)}{4(\log 2)^2(2^\gamma - 1)^2}. 
\end{align*}
See Theorems 3.2.5 and 3.3.5 of \cite{haan2006extreme} for more details. Since $2(2^\gamma - 1)^2< 2^{2\gamma + 1} + 1$ for $\gamma > 0$, it is easily seen that $\tau^{\text{Pick}} > \gamma^2/2(\log 2)^2 > \tau^{\text{Hill}}$. The statements \eqref{eq:hill_approx} and \eqref{eq:asympnormHP}, in conjuction with Slutsky's theorem, give asymptotic normality of the tail index estimators based on the degrees. Asymptotic normality of the Hill and Pickands estimators for Norros-Reittu and Chung-Lu random graph models is presented in Theorem \ref{thm:hill}. Theorem \ref{thm:hill} is proved in Section \ref{sec:tail_approx}. 

\begin{theorem}
\label{thm:hill}
Assume that $U$ satisfies \eqref{eq:rv} and \eqref{eq:Ulower}.
Suppose that the network is generated under either a Norros-Reittu model with $\alpha > 0$ or Chung-Lu model with $\alpha > 2$. Suppose the second order condition \eqref{eq:2rv} holds with  $k(n) \rightarrow \infty$, $k(n) \log^{\alpha}(n) /n^{\frac{1}{4\alpha + 1}} \rightarrow 0$ and $k(n)A(n/k(n)) \rightarrow 0$ as $n \rightarrow \infty$. Then for $\ell \in \{\textup{Hill}, \textup{Pick} \}$
\begin{align}
\label{eq:asymp_normD}
\sqrt{k(n)}\left( \hat{\gamma}^\ell_D(n) - \gamma \right) \Rightarrow N\left(0, \tau^{\ell} \right),  \qquad \text{as } n \rightarrow \infty.
\end{align}
\end{theorem}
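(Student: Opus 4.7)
The plan is to decompose
\[
\sqrt{k(n)}\bigl(\hat\gamma^\ell_D(n) - \gamma\bigr) = \sqrt{k(n)}\bigl(\hat\gamma^\ell_W(n) - \gamma\bigr) + \sqrt{k(n)}\bigl(\hat\gamma^\ell_D(n) - \hat\gamma^\ell_W(n)\bigr).
\]
The first term converges to $N(0,\tau^\ell)$ by \eqref{eq:asympnormHP}, which already uses the second-order condition \eqref{eq:2rv} and the bias hypothesis $k(n)A(n/k(n)) \to 0$. It therefore suffices to show the second term is $o_p(1)$, which is precisely the approximation \eqref{eq:hill_approx} flagged in the text, after which Slutsky's theorem closes the argument.

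First I would invoke Theorem~\ref{thm:order} to replace the degree order statistics $D_{(i)}(n)$ appearing in $\hat\gamma^\ell_D$ by the labelled degrees $D_i(n)$, which in the ordered model are paired one-to-one with the weight order statistics $W_{(i)}(n)$. For the Hill estimator alignment up to $k(n)$ suffices; for Pickands I would apply Theorem~\ref{thm:order} with $4k(n)$ in place of $k(n)$, a constant-factor change that leaves the growth condition $k(n)\log^\alpha(n)/n^{1/(4\alpha+1)}\to 0$ intact.

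Next I would work on the concentration event $C(n)$ from the proof of Theorem~\ref{thm:order}, which gives $|D_i(n) - W_{(i)}(n)| < \sqrt{5\log(n)W_{(i)}(n)}$ uniformly in the alignment range, together with the order-of-magnitude estimate $W_{(k(n))}(n) \asymp (n/k(n))^\gamma$ up to slowly varying factors, inherited from \eqref{eq:rv} and \eqref{eq:orddist}. For Hill the telescoping identity
\[
\hat\gamma^\text{Hill}_D(n) - \hat\gamma^\text{Hill}_W(n) = \frac{1}{k(n)-1}\sum_{i=1}^{k(n)-1}\log\frac{D_i(n)}{W_{(i)}(n)} - \log\frac{D_{k(n)}(n)}{W_{(k(n))}(n)}
\]
reduces the task to uniformly bounding $|\log(D_i(n)/W_{(i)}(n))|$. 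A Taylor expansion of $\log(1+x)$ combined with the two bounds above yields a uniform estimate of order $\sqrt{\log(n)(k(n)/n)^\gamma}$, so that $\sqrt{k(n)}\,|\hat\gamma^\text{Hill}_D(n) - \hat\gamma^\text{Hill}_W(n)| = O_p\bigl(\sqrt{k(n)^{1+\gamma}\log(n)/n^\gamma}\bigr)$, which tends to zero because $1/(4\alpha+1) < \gamma/(1+\gamma) = 1/(\alpha+1)$ makes the hypothesis strictly stronger than what the Hill approximation requires. The Pickands case follows the same template after writing
\[
\hat\gamma^\text{Pick}_D(n) - \hat\gamma^\text{Pick}_W(n) = \frac{1}{\log 2}\log\frac{(D_{k(n)}(n)-D_{2k(n)}(n))(W_{(2k(n))}(n)-W_{(4k(n))}(n))}{(D_{2k(n)}(n)-D_{4k(n)}(n))(W_{(k(n))}(n)-W_{(2k(n))}(n))},
\]
substituting $D_i(n) = W_{(i)}(n) + O\bigl(\sqrt{\log(n)W_{(i)}(n)}\bigr)$, and using that both denominators are of the same order as $W_{(k(n))}(n)$, namely $(1-2^{-\gamma})U(n/k(n))$ and its scaled sibling, by regular variation.

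The main obstacle will be pinning down sufficiently sharp high-probability lower bounds on $W_{(k(n))}(n)$ and, for Pickands, on the spacings $W_{(k(n))}(n)-W_{(2k(n))}(n)$ and $W_{(2k(n))}(n)-W_{(4k(n))}(n)$, uniformly tight enough that the $\sqrt{k(n)}$ normalization does not defeat the Taylor error. These should follow from the spacing event $S(n)$ of Theorem~\ref{thm:order} together with \eqref{eq:orddist}, \eqref{eq:rv}, and the non-degeneracy of the slowly varying factor guaranteed by \eqref{eq:Ulower}, but establishing them uniformly over the alignment range requires care. The second-order hypothesis $k(n)A(n/k(n))\to 0$ itself plays no role in the degree-to-weight approximation; it is used only to control the bias of the weight-based estimator relative to $\gamma$.
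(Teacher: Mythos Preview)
Your proposal is correct and matches the paper's approach: the same Slutsky decomposition, the same reduction of $\hat\gamma^\ell_D - \hat\gamma^\ell_W$ on the alignment event to uniform control of $|\log(D_i/W_{(i)})|$ via concentration, and the same appeal to $W_{(k(n))}(n)/U(n/k(n))\xrightarrow{p}1$ for the order-of-magnitude estimates. The one minor technical difference is that for Pickands the paper lower-bounds the spacings $W_{(k(n))}-W_{(2k(n))}$ by telescoping on the event $S(n)$ itself (yielding $2k(n)\sqrt{5\log(n)W_{(2k(n))}}$ and hence an $O_p(1/\sqrt{k(n)})$ approximation error), whereas you propose the direct regular-variation route $(W_{(k(n))}-W_{(2k(n))})/U(n/k(n))\xrightarrow{p}1-2^{-\gamma}$; both work under the stated growth condition.
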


The key finding in Theorem \ref{thm:hill} is that despite the degree dependence present in IRGs, the tail index estimators are asymptotically normal with the same asymptotic variance as in the iid case. Though, as we discuss in the conclusion, we do not expect the same result to hold for other scale-free random graph models. 

\subsection{Probability weighted moment estimator}

The degree-based PWM estimator based on the upper $k(n)$ degrees is given by
\begin{align*}
\hat{\gamma}^\text{PWM}_D(n) = \frac{\hat{I}^{(1)}_{D}(n) - 4\hat{I}^{(2)}_{D}(n)}{\hat{I}^{(1)}_{D}(n) - 2\hat{I}^{(2)}_{D}(n)},
\end{align*}
where for $q = 1, 2$ 
\begin{align}
\label{eq:pwm}
\hat{I}^{(q)}_{D}(n) = \frac{1}{k(n) - 1} \sum_{i = 1}^{k(n) - 1}\left(\frac{i}{k(n) - 1}\right)^{q-1}\left(D_{(i)}(n) - D_{(k(n))}(n) \right).
\end{align}
As before, the weight-based version is defined by replacing the degree order statistics by the weight order statistics. Typically, asymptotic normality of the PWM estimator is achieved by showing that the probability weighted moments in \eqref{eq:pwm} are jointly asymptotically normal and then applying the delta method \cite{haan2006extreme}. Hence, we assume that the weighted-based probability weighted moments, $(\hat{I}^{(1)}_{W}(n), \hat{I}^{(2)}_{W}(n))$, are asymptotically normal and then approximate the degree-based moments by their weighted counterparts. In particular, we prove in Lemma \ref{lem:pwm_approx} that for $q \in \{1, 2\}$
\begin{align}
\label{eq:pwm_approx}
\begin{split}
\sqrt{k(n)}\left|\hat{I}^{(q)}_{D}(n) - \hat{I}^{(q)}_{W}(n) \right| \xrightarrow{p}& 0
\end{split}
\end{align}
as $n \rightarrow \infty$. Asymptotic normality of the centered and scaled  $(\hat{I}^{(1)}_{W}(n), \hat{I}^{(2)}_{W}(n))$ is again ensured through the second order condition on $U$. That is, by assuming \eqref{eq:2rv} and that $k(n)A(n/k_n) \rightarrow 0$, one may achieve that
\begin{align}
\label{eq:pwms:norm}
\sqrt{k(n)}\left( 
\begin{pmatrix} 
\hat{I}^{(1)}_{W}(n)/a(n/k(n)) \\ 
\hat{I}^{(2)}_{W}(n)/a(n/k(n)) 
\end{pmatrix}
- 
\begin{pmatrix} 
\frac{1}{1 -\gamma} \\ 
\frac{1}{2(2 -\gamma)} 
\end{pmatrix}
\right) 
\Rightarrow 
N\left( 
\begin{pmatrix} 
0 \\ 
0 
\end{pmatrix}
,
\begin{pmatrix} 
\sigma_{11} & \sigma_{12} \\
\sigma_{21} & \sigma_{22}
\end{pmatrix}
\right),
\end{align}
with $a(t) \equiv \gamma U(t)$ for $t > 1$ and for $q, r \in \{1, 2\}$
\begin{align*}
\sigma_{qr} = \frac{1}{q(q - \gamma)r(r-\gamma)}\left( \frac{qr}{q + r - 1 - 2\gamma} + \gamma^2 \right). 
\end{align*}
For more details see either Proposition 4.1 of \cite{de2024bootstrapping} or Theorem 3.6.1 of \cite{haan2006extreme}, both of which rely on Theorem 2.4.8 of \cite{haan2006extreme} in our setting. We now present the asymptotic normality of the degree-based PWM estimator.
\begin{theorem}
\label{thm:pwm_norm}
Assume that $U$ satisfies \eqref{eq:rv} and \eqref{eq:Ulower}.
Let $\alpha > 2$ and suppose the network is generated under either the Norros-Reittu or Chung-Lu model. Further suppose that the second order condition \eqref{eq:2rv} holds with $k(n) \rightarrow \infty$, $k(n)\log^\alpha (n)/n^{\frac{1}{4\alpha + 1}} \rightarrow 0$ and $k(n)A(n/k(n))\rightarrow 0$ as $n \rightarrow \infty$. Then
\begin{align*}
\sqrt{k(n)} \left(\hat{\gamma}^\textup{PWM}_D(n) - \gamma\right) \Rightarrow N(0, \tau^\textup{PWM}),
\end{align*}
as $n \rightarrow \infty$ where 
\begin{align*}
\tau^\textup{PWM} = \frac{(1 - \gamma)(2 - \gamma)^2(1 - \gamma + 2\gamma^2)}{(1 - 2\gamma)(3 - 2\gamma)}.
\end{align*}
\end{theorem}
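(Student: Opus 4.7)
The plan is to derive asymptotic normality of $\hat{\gamma}^{\text{PWM}}_D$ by combining the approximation \eqref{eq:pwm_approx} (Lemma \ref{lem:pwm_approx}) with the joint asymptotic normality \eqref{eq:pwms:norm} of the weight-based probability-weighted moments, and then transferring through the delta method applied to the scale-invariant map that defines the PWM estimator.

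First I would upgrade \eqref{eq:pwm_approx} to a statement on the scale $a(n/k(n))$. Since $\alpha > 2$ yields $\gamma \in (0, 1/2)$ and regular variation implies $a(n/k(n)) = \gamma U(n/k(n)) \to \infty$, one obtains $\sqrt{k(n)}\,|\hat{I}^{(q)}_D(n) - \hat{I}^{(q)}_W(n)|/a(n/k(n)) \xrightarrow{p} 0$ for $q \in \{1, 2\}$. Combined with \eqref{eq:pwms:norm} via Slutsky's theorem, this gives joint asymptotic normality of the rescaled degree PWMs $(\hat{I}^{(1)}_D(n)/a(n/k(n)), \hat{I}^{(2)}_D(n)/a(n/k(n)))$ around $(1/(1-\gamma), 1/(2(2-\gamma)))$ with covariance $\Sigma = (\sigma_{qr})_{q,r=1,2}$. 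Next I would observe that the PWM estimator arises from the scale-invariant map $g(x,y) = (x - 4y)/(x - 2y)$, which satisfies $g(1/(1-\gamma), 1/(2(2-\gamma))) = \gamma$ and is differentiable at this point. Scale invariance gives $\hat{\gamma}^{\text{PWM}}_D(n) = g(\hat{I}^{(1)}_D(n)/a(n/k(n)), \hat{I}^{(2)}_D(n)/a(n/k(n)))$, so the delta method delivers
\begin{align*}
\sqrt{k(n)}\bigl(\hat{\gamma}^{\text{PWM}}_D(n) - \gamma\bigr) \Rightarrow N\bigl(0,\, \nabla g^T \Sigma \nabla g\bigr).
\end{align*}
Evaluating $\nabla g = ((1-\gamma)^2(2-\gamma),\, -2(1-\gamma)(2-\gamma)^2)^T$ at the target and substituting the explicit $\sigma_{qr}$ reduces the asymptotic variance to the stated $\tau^{\text{PWM}}$; this is a routine algebraic manipulation that I would not carry out in a plan.

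The main obstacle is Lemma \ref{lem:pwm_approx} itself. Establishing it requires working on the intersection of the alignment event from Theorem \ref{thm:order} (so that $D_{(i)}(n) = D_i(n)$ for $i \in [k(n)]$) and the concentration event $C(n)$ (bounding $|D_i(n) - W_{(i)}(n)|$ by $\sqrt{5\log(n) W_{(i)}(n)}$), then controlling the weighted sum in \eqref{eq:pwm} against the scale $a(n/k(n)) \sim c\,(n/k(n))^{\gamma}$ using the growth condition $k(n)\log^\alpha(n)/n^{1/(4\alpha+1)} \to 0$. The factor $(i/(k(n)-1))^{q-1}$ for $q=2$ shifts additional weight onto the top order statistics, making the second PWM more delicate than the first; the restriction $\alpha > 2$ is what ensures the target point $(1/(1-\gamma), 1/(2(2-\gamma)))$ is finite and that the delta method produces a non-degenerate limit. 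Granting the lemma, the theorem follows by the outline above.
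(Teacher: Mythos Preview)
Your proposal is correct and follows essentially the same route as the paper: use Lemma~\ref{lem:pwm_approx} together with the joint normality \eqref{eq:pwms:norm} and Slutsky's theorem to obtain the limit for the rescaled degree-based probability-weighted moments, then apply the delta method through the scale-invariant map $g(x,y)=(x-4y)/(x-2y)$. One small inaccuracy in your commentary: for $q=2$ the weight $(i/(k(n)-1))$ is \emph{smaller} on the top order statistics (small $i$), not larger, so the second moment is in fact no more delicate than the first; the paper simply bounds the weight by $1$ uniformly.
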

Theorem \ref{thm:hill} is proved in Section \ref{sec:tail_approx}. Using the fact that $2(2 - \gamma) > 3 - 2\gamma$ and $1 - \gamma > 1 - 2\gamma$, it is trivially seen that $\tau^\textup{PWM} > \tau^\textup{Hill}$ for $\gamma < 1/2$. The asymptotic variances $\tau^\textup{PWM}$ and $\tau^\textup{Pick}$ are more difficult to compare, though plots show that $\tau^\textup{PWM} < \tau^\textup{Pick}$ for approximately $\gamma < 0.4142$ while  $\tau^\textup{PWM} \geq \tau^\textup{Pick}$ otherwise. Like the Hill and Pickands estimators, the degree-based PWM estimator adopts the asymptotic variance from the iid setting.

\section{Simulations}\label{sec:sim}

We now present simulation studies verifying the findings in Theorems \ref{thm:order}, \ref{thm:hill} and \ref{thm:pwm_norm}. We first numerically examine whether $k(n)$ can grow any faster than the rate of $n^{\frac{1}{4\alpha + 1}}/\log^\alpha(n)$ and still achieve alignment of order statistics in \eqref{eq:ord_aling}. Secondly, we evaluate how well the asymptotic normality of the tail index estimators holds for different choices of $k(n)$ and $\alpha$ across the two random graph models. The latter simulations shed light on issues with tail index estimation for network data. 

\subsection{Empirical alignment of upper order statistics}

Define the first index for which the alignment of upper order statistics does not hold for a given sample as
\begin{align*}
K(n) := \inf\left\lbrace m \in [n]: D_{(m)}(n) \neq D_{m}(n) \right\rbrace, 
\end{align*}
and clearly, $K(n)$ is supported on $[n]$. In order for the alignment of order statistics to hold with high probability for a given $k(n)$ sequence, we would expect $K(n)$ to grow slightly faster than $k(n)$. If the rate for $k(n)$ derived in Theorem \ref{thm:order} is tight, then we may also expect $K(n)$ and $k(n)$ to be approximately of the same order. 

In order to evaluate how the empirical distribution of $K(n)$ scales with $n$, we consider the following simulation set up. Suppose $W_{[n]}(n)$ are the order statistics of $n$ independent samples from either a (a) Pareto distribution supported on $[2, \infty)$ with $\alpha = 1$, or (b) the distribution function $1 - F(x) = (2+x)/x^2$, $x > 2$. For each sample size $n \in \{2^{10}, 2^{11}, \dots, 2^{20} \}$, we generate $1{,}000$ Norros-Reittu random graphs with weight distributions (a) and (b) and compute $K(n)$ for each realization. Here, (a) and (b) are chosen so that the only obvious difference between the two distributions is the presence of the slowly-varying part of $1 - F(x)$ in (b). 

\begin{figure}[h]
\centering
\includegraphics[width=\textwidth]{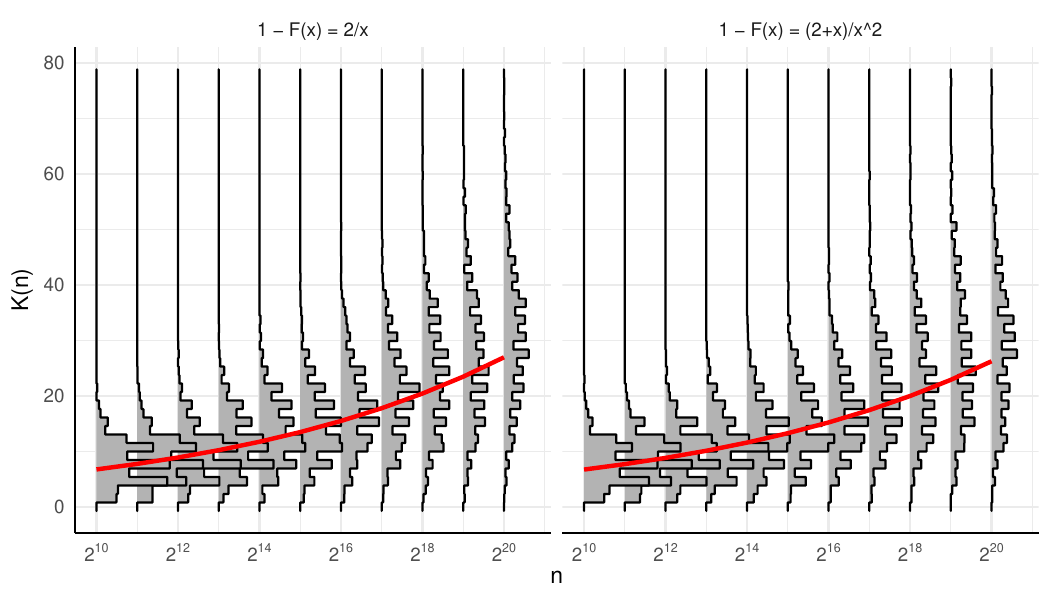}
\caption{Histograms of 1000 realizations of $K(n)$ for across sample sizes $n \in \{2^{10}, 2^{11}, \dots, 2^{20}\}$ with weight distributions (a) $1 - F(x) = 2/x$ and (b) $1 - F(x) = (2 + x)/x^2$, $x > 2$. The red line corresponds to the curves $y_a = 0.7430 n^{0.2004}$ and $y_b = 0.7717n^{0.1970}$ where the parameters are estimated by regressing the $\log_2(K(n))$ realizations against $\log_2(n)$ for scenarios (a) and (b), respectively.}\label{fig:k}
\end{figure}

Histograms of $K(n)$ for each sample size are presented in Figure \ref{fig:k}. In red we plot the curves $y_a = 0.7430 n^{0.2004}$ and $y_b = 0.7717n^{0.1970}$ where the coefficients and powers are obtained by regressing the $\log_2(K(n))$ realizations against $\log_2(n)$ for the scenarios (a) and (b), respectively. In either case, $K(n)$ grows approximately proportional to $n^{0.2}$, the theoretical upper bound obtained in Theorem \ref{thm:order} for $k(n)$, ignoring logarithmic terms. This indicates that our theory reveals a reasonably tight upper bound on the growth rate of $k(n)$.

\subsection{Sensitivity to the choice of $k(n)$}

We now evaluate how sensitive the normal approximations to the distributions of the tail index estimators are to the choice on $k(n)$ across a range of $\alpha$ values. In each simulation setting,  we consider networks of $n = 2{,}000{,}000$ nodes with $W_{[n]}(n)$ being the order statistics of $n$ independent samples from a Burr distribution with $1 - F(x) = (1 + x^{\alpha})^{-1}$. This distribution satisfies the second order condition with $\rho = -1$. We generate $1{,}000$ replicates of Norros-Reittu and Chung-Lu random graphs across a range of $\alpha$ values. For the Norros-Reittu model we consider $\alpha \in \{1.25, 1.5, 1.75 \}$ while for the Chung-Lu model we let $\alpha \in \{2.25, 2.5, 2.75\}$, the latter choice being consistent with the requirement that $\alpha > 2$ in our theory. For each network replicate, we compute the Hill estimator for $k(n) \in \{32, 64, 128, 256\}$ and the Pickands estimator for $k(n) \in \{8, 16, 32, 64\}$. This choice is made so that the last order statistic used by the Pickands estimator, the $4k(n)$-th largest, is the same as the last order statistic used by the Hill estimator. We only compute the PWM estimator on the Chung-Lu random graphs since $\alpha > 2$ is required to achieve asymptotic normality, but evaluate the estimator over a wider range of $k(n) \in \{32, 64, \dots, 2048\}$. Figures \ref{fig:hill}, \ref{fig:pick} and \ref{fig:pwm}, display the empirical distributions of the centered and scaled tail index estimators, along with their expected asymptotic distributions.    

\begin{figure}[h]
\centering
\includegraphics[width=\textwidth]{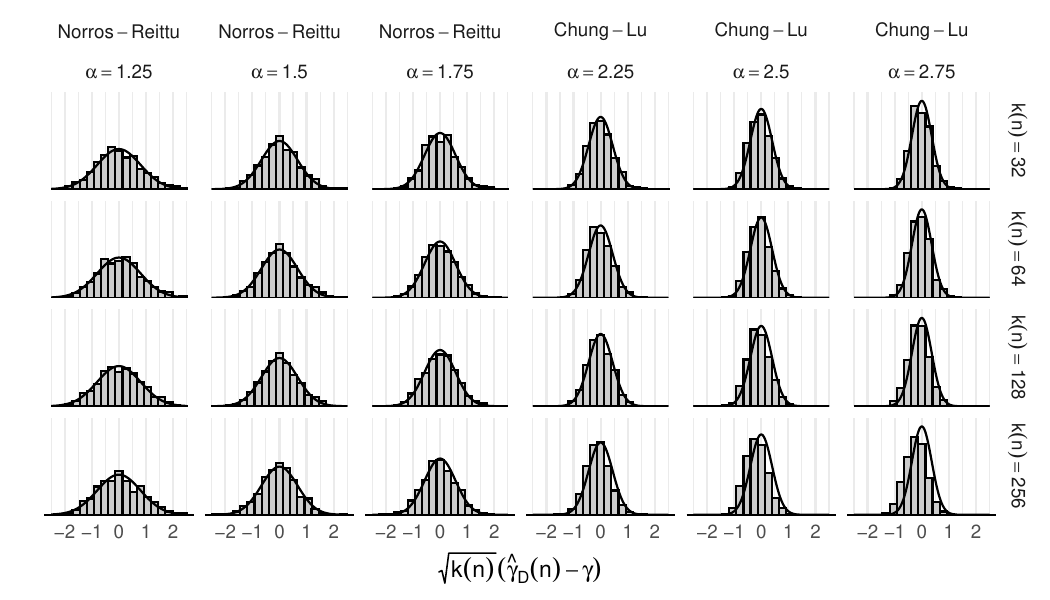}
\caption{Histograms of centered and scaled Hill estimators. For each $\alpha \in \{1.25, 1.5, 1.75 \}$ we simulate $1{,}000$ Norros-Reittu graphs and for each $\alpha \in \{2.25, 2.5, 2.75 \}$ we simulate $1{,}000$ Chung-Lu graphs. For each multigraph we compute the Hill estimator for $k(n) \in \{32, 64, 128, 256\}$. }\label{fig:hill}
\end{figure}

For the Hill estimator, histograms indicate that in order to retain asymptotic normality of the Hill estimator as $\alpha$ increases, it is required that $k(n)$ should decrease. This is consistent with our theory since we require $k(n) = o(n^{\frac{1}{4\alpha + 1}}/\log^\alpha(n))$ to achieve asymptotic normality of the Hill estimator in Theorem \ref{thm:hill}. We also note that for larger values of $\alpha$, ties among the degrees form in finite-sample situations. It is well-known that the Hill estimator behaves erratically in the presence of ties which, along with our theory, partially explains the non-normal behavior for large values of $\alpha$ and $k(n)$ \cite{matsui2013estimation, resnick2007heavy}. 

\begin{figure}[h]
\centering
\includegraphics[width=\textwidth]{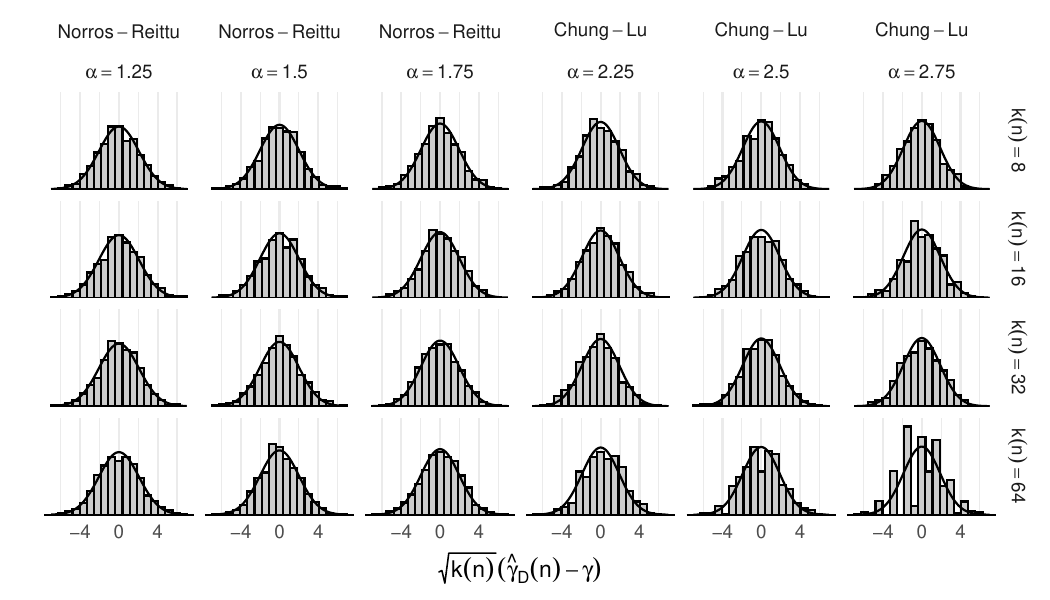}
\caption{Histograms of centered and scaled Pickands estimators. For each $\alpha \in \{1.25, 1.5, 1.75 \}$ we simulate $1{,}000$ Norros-Reittu graphs and for each $\alpha \in \{2.25, 2.5, 2.75 \}$ we simulate $1{,}000$ Chung-Lu graphs. For each multigraph we compute the Pickands estimator for $k(n) \in \{8, 16, 32, 64\}$. }\label{fig:pick}
\end{figure}

From Figure~\ref{fig:pick}, we see that the Pickands estimator behaves less sensitively to the choice of $k(n)$, at least among the smaller values of $k(n)$. Once $k(n) = 64$ and ties are more present, however, the erratic behavior of the Pickands estimator becomes more apparent.

\begin{figure}[h]
\centering
\includegraphics[width=0.6\textwidth]{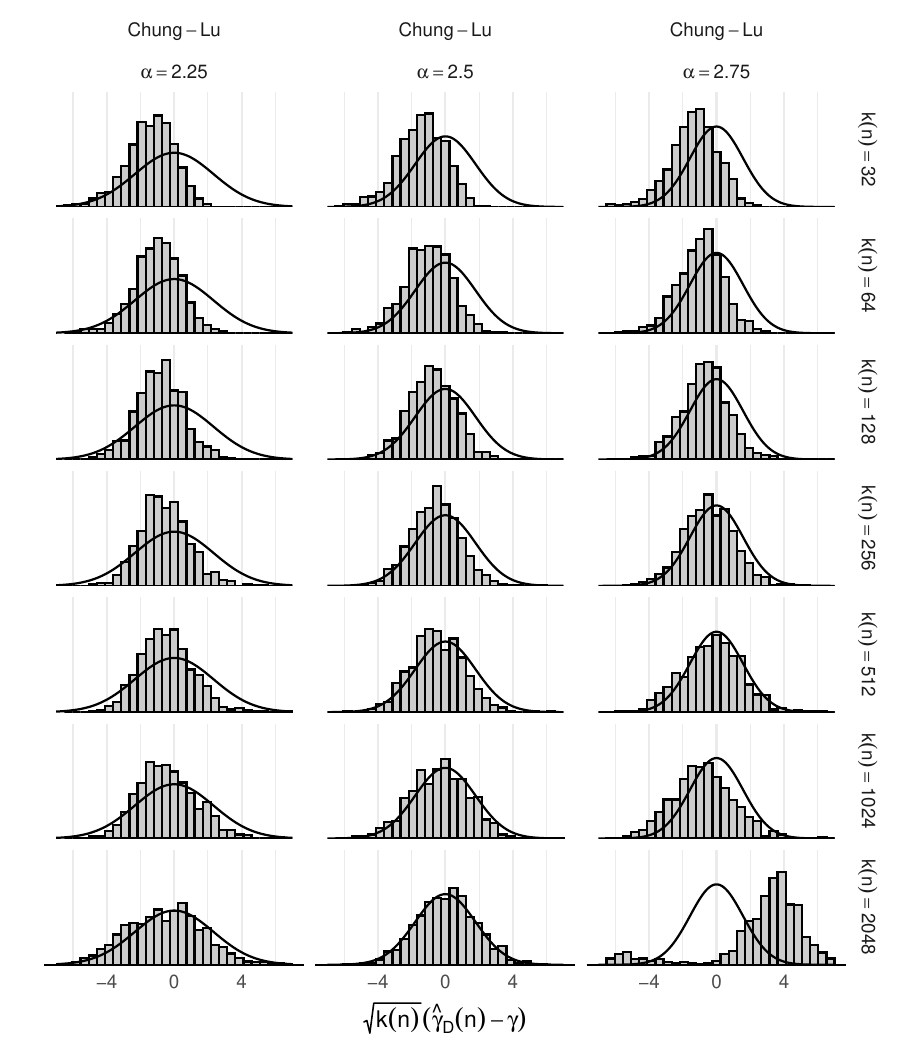}
\caption{Histograms of centered and scaled PWM estimators. For each $\alpha \in \{2.25, 2.5, 2.75 \}$ we simulate $1{,}000$ Chung-Lu graphs. For each multigraph we compute the PWM estimator for $k(n) \in \{32, 64, 128, 256, 512, 1024, 2048\}$. }\label{fig:pwm}
\end{figure}

The histograms in Figure~\ref{fig:pwm} indicate that the distributional approximation for the PWM estimator is more sensitive to the choice of $k(n)$. In fact, asymptotic normality is more reliably achieved for larger values of $k(n)$, at least until ties among the upper degrees become more frequent. Larger choices of $k(n)$ providing more accurate normal approximations for the PWM estimator is consistent with other simulation studies in the literature \cite{de2024bootstrapping}. 

In applied settings, it is common to compare tail index estimates across a variety of methods. Our simulations indicate that a choosing a common $k(n)$ across each method is not appropriate. Instead, as done in \cite{voitalov2019scale}, each tail index estimate should be computed with a personalized threshold that is estimated through a data-driven procedure.

\section{Conclusion}\label{sec:conclusion}

In this paper, we have taken a vital first step towards rigorously justifying statistical procedures for inferring the tail behavior of degree distributions from scale-free random networks. This is achieved by leveraging and further probing the relationship between the large weights and large degrees in inhomogeneous random graphs. By approximating the upper degree order statistics by their corresponding weight order statistics, we prove the asymptotic normality of three tail index estimators, each of which exhibited behavior analogous to the iid setting. We comment that this behavior likely only holds for the class of IRGs. Simulations not presented here indicate that tail index estimators computed on network models with temporal dependence, such as the preferential attachment network, do not reflect the same asymptotic behavior. 

The alignment of upper order statistics provides a general technique for inspecting the performance of extreme-value methods when applied to the degrees from scale-free IRGs. As mentioned earlier, a matter that hinders the application of tail index estimators to real-world networks is choosing $k(n)$. In future work, we aim to prove consistency of a double bootstrap procedure used to estimate the optimal choice of $k(n)$ \cite{danielsson2001using}. Simulation results indicate that this procedure performs well for scale-free networks, especially in comparison to other popular methods \cite{voitalov2019scale}. Moreover, we may finally provide a goodness-of-fit test for the scale-free model, the absence of which has led to much controversy in the network science community \cite{broido2019scale, holme2019rare, voitalov2019scale}. In particular, we envision extending the tests of \cite{dietrich2002testing} and \cite{husler2006testing} to scale-free networks.

\section{Proofs}\label{sec:proof}

\subsection{Proof of Lemma \ref{lem:perm}}\label{sec:perm}

\begin{proof}[Proof of Lemma \ref{lem:perm}]
Fix constant weights $\{w_i\}_{i = 1}^n$ with $l(n) = \sum_{i = 1}^n w_i$. Let $p$ be the conditional the mass function governing either \eqref{eq:NR} and \eqref{eq:CL}. That is, under the Norros-Reittu model
\begin{align*}
p(\{ a_{ij}\}_{1 \leq i \leq j \leq n } \mid w_1, \dots, w_n) =& \prod_{1 \leq i \leq j \leq n} e^{-w_iw_j/l(n)}(w_iw_j/l(n))^{a_{ij}}/a_{ij}!,
\end{align*}
where the $a_{ij} \in \mathbb{Z}_+$ for $1 \leq i \leq j \leq n$. On the other hand, under the Chung-Lu model
\begin{align*}
p(\{ a_{ij}\}_{1 \leq i \leq j \leq n } \mid w_1, \dots, w_n) =& \prod_{1 \leq i \leq j \leq n} (w_iw_j/l(n) \wedge 1)^{a_{ij}}(1 - w_iw_j/l(n) \wedge 1)^{1 - a_{ij}},
\end{align*}
where the $a_{ij} \in \{0, 1\}$ for $1 \leq i \leq j \leq n$. Then we have that
\begin{align*}
\mathbb{P}\big( \{&A_{\pi(i)\pi(j)}(n) \}_{1 \leq i \leq j \leq n } = \{ a_{ij}\}_{1 \leq i \leq j \leq n } \big) \\
=& \frac{1}{n!}\sum_{\sigma \in S_n} \mathbb{P}\left( \{A_{\sigma(i)\sigma(j)}(n) \}_{1 \leq i \leq j \leq n } = \{ a_{ij}\}_{1 \leq i \leq j \leq n } \right) \\
=& \frac{1}{n!}\sum_{\sigma \in S_n} \mathbb{E}\left[ \mathbb{P}\left( \{A_{\sigma(i)\sigma(j)}(n) \}_{1 \leq i \leq j \leq n } = \{ a_{ij}\}_{1 \leq i \leq j \leq n } \mid \{W_{(i)}(n)\}_{i \in [n]} \right) \right] \\
=& \frac{1}{n!}\sum_{\sigma \in S_n} \mathbb{E}\left[ p\left(\{ a_{ij}\}_{1 \leq i \leq j \leq n } \mid W_{(\sigma(1))}(n), \dots, W_{(\sigma(n))}(n) \right) \right] \\
=& \mathbb{E}\left[ \frac{1}{n!}\sum_{\sigma \in S_n} p\left(\{ a_{ij}\}_{1 \leq i \leq j \leq n } \mid W_{(\sigma(1))}(n), \dots, W_{(\sigma(n))}(n) \right) \right], \\
\intertext{and using that fact that the conditional distribution of $(W_1, \dots, W_n)$ given $(W_{(1)}(n), \dots, W_{(n)}(n))$ is uniform over all permutations of the vector $(W_{(1)}(n), \dots, W_{(n)}(n))$, we have that }
=& \mathbb{E}\left[ \mathbb{E}\left[ p\left(\{ a_{ij}\}_{1 \leq i \leq j \leq n } \mid W_1, \dots, W_n \right)  \mid W_{(1)}(n), \dots, W_{(n)}(n) \right] \right] \\
=& \mathbb{E}\left[  p\left(\{ a_{ij}\}_{1 \leq i \leq j \leq n } \mid W_1, \dots, W_n \right) \right].
\end{align*}
The last quantity is the mass function associated with an inhomogeneous random graph and hence the proof is complete.
\end{proof}

\subsection{Order statistic bounds}

In this section we develop some basic upper and lower bounds for the order statistics $W_{(i)}(n)$, $i \in [n]$. Concentration inequalities are first developed for exponential order statistics, and then translated to upper and lower bounds for $W_{(i)}(n)$, $i \in [n]$ using the distributional representation in \eqref{eq:orddist}. We first recall R\'enyi's representation for exponential order statistics \cite{renyi1953theory}. 

\begin{theorem}
\label{thm:renyi}
Suppose $Y_1, Y_2, \dots, Y_n$ are independent unit rate exponential random variables. Then
\begin{align*}
\left( Y_{(1)}(n), Y_{(2)}(n), \dots, Y_{(n)}(n) \right) \overset{d}{=} \left( \sum_{j = 1}^n \frac{E_j}{j}, \sum_{j = 2}^n \frac{E_j}{j}, \dots, \frac{E_n}{n} \right).
\end{align*}
for independent unit rate exponential random variables $E_1, E_2, \dots, E_n$.
\end{theorem}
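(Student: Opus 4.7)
The plan is to reduce the claim to showing that the decreasing spacings $S_j := Y_{(j)}(n) - Y_{(j+1)}(n)$ for $j \in [n-1]$, together with $S_n := Y_{(n)}(n)$, are mutually independent with $S_j \sim \mathrm{Exp}(j)$. Once this is in hand, writing $Y_{(j)}(n) = \sum_{k=j}^n S_k$ and coupling each $S_k$ to $E_k/k$ for independent unit rate exponentials $E_k$ yields exactly the asserted joint distributional equality, since a rate-$j$ exponential equals in distribution $E_j/j$.

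My primary route will be a direct change-of-variables on the joint density. The joint density of the decreasing order statistics $(Y_{(1)}(n), \ldots, Y_{(n)}(n))$ on the ordered region $\{y_1 \geq y_2 \geq \cdots \geq y_n \geq 0\}$ is $n! \exp(-\sum_{i=1}^n y_i)$. The map $(y_1, \ldots, y_n) \mapsto (s_1, \ldots, s_n)$ with $s_j = y_j - y_{j+1}$ (and the convention $y_{n+1} := 0$) is a bijection onto the positive orthant with Jacobian of absolute value one. Combining this with the reindexing identity $\sum_{i=1}^n y_i = \sum_{j=1}^n j\,s_j$, which follows because $s_j$ contributes additively to each of $y_1, \ldots, y_j$, one finds that the joint density of $(S_1, \ldots, S_n)$ factorises as $\prod_{j=1}^n j e^{-j s_j}$ on the positive orthant. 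This simultaneously exhibits mutual independence of the spacings and identifies each marginal as $\mathrm{Exp}(j)$, from which the theorem follows immediately.

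A more conceptual alternative, if one prefers to avoid the Jacobian manipulation, uses the memoryless property in an inductive fashion. The minimum $Y_{(n)}(n)$ is the first-occurrence time of $n$ competing $\mathrm{Exp}(1)$ clocks and is therefore $\mathrm{Exp}(n)$; conditional on this minimum and the identity of its argmin, the remaining $n-1$ clocks, shifted to start at $Y_{(n)}(n)$, are again iid $\mathrm{Exp}(1)$, so the next spacing $S_{n-1}$ is $\mathrm{Exp}(n-1)$ independent of $S_n$. Iterating this argument until only one clock remains yields the same decomposition. I do not anticipate a genuine obstacle, as this is a classical identity; the only computation requiring care is the reindexing $\sum_i y_i = \sum_j j s_j$, which is transparent once one writes $y_j = \sum_{k=j}^n s_k$ and swaps the order of summation.
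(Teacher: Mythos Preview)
Your proof is correct: the change-of-variables argument cleanly identifies the spacings $S_j$ as independent $\mathrm{Exp}(j)$ variables, and the reconstruction $Y_{(j)}(n)=\sum_{k=j}^n S_k$ then gives the claimed joint distributional identity. The memoryless alternative you sketch is also sound.

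As for comparison with the paper: the paper does not actually prove this theorem. It is stated as a classical fact with a citation to R\'enyi's original 1953 paper and is used only as a tool (via the representation \eqref{eq:orddist}) in the subsequent order-statistic bounds. So you have supplied a self-contained proof where the paper simply quotes the result; your density argument is the standard textbook route and is entirely appropriate here.
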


For the R\'enyi representation components, we have the following concentration inequality. Note that since $i \sum_{j = i}^n \frac{1}{j^2} \leq \frac{\pi^2}{6}$, a sufficient condition for $\epsilon > 2i \sum_{j = i}^n \frac{1}{j^2}$ for all $i \in [n]$ is that $\epsilon > \frac{\pi^2}{3}$. 
\begin{lemma}
\label{lem:conc_exp}
Let $E_1, \dots, E_n$ be iid unit rate exponential random variables. Then for any $i \in [n]$
\begin{align*}
\mathbb{P}\left( \left| \sum_{j = i}^n \frac{(E_j - 1)}{j} \right| > \epsilon \right) \leq 
\begin{cases}
2\exp\left\lbrace - \frac{\epsilon^2}{8 \sum_{j = i}^n \frac{1}{j^2}}\right\rbrace, \qquad &\text{if } \epsilon \leq 2i \sum_{j = i}^n \frac{1}{j^2}, \\
2\exp\left\lbrace - \frac{i}{4} \epsilon \right\rbrace, \qquad &\text{if } \epsilon > 2i \sum_{j = i}^n \frac{1}{j^2}.
\end{cases}
\end{align*}
\end{lemma}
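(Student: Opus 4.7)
The plan is a standard Chernoff/Bernstein argument on the independent summands $X_j := (E_j-1)/j$ for $j = i, \dots, n$, arranged so that the two regimes in the stated bound emerge naturally from the two cases in the Chernoff optimization. I would bound the moment generating function of each $X_j$, sum by independence to get a cumulant bound for $S := \sum_{j=i}^n X_j$, optimize the Markov parameter, and handle the lower tail symmetrically.

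First, using $\mathbb{E}[e^{tE_j}] = 1/(1-t)$ for $t<1$, I would compute, for $\lambda<j$,
\begin{equation*}
\log\mathbb{E}\bigl[e^{\lambda X_j}\bigr] = -\lambda/j - \log(1-\lambda/j) = \sum_{k\geq 2}\frac{(\lambda/j)^k}{k}.
\end{equation*}
Using the elementary inequality $-x-\log(1-x)\leq 2x^2$ on $|x|\leq 1/2$ (which follows from $\sum_{k\geq 2}|x|^k/k \leq x^2/(2(1-|x|)) \leq x^2$), and restricting $|\lambda|\leq i/2$ so that $|\lambda/j|\leq 1/2$ for every $j\geq i$, I obtain $\log\mathbb{E}[e^{\lambda X_j}]\leq 2\lambda^2/j^2$. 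Setting $V := \sum_{j=i}^n 1/j^2$, independence then yields $\log\mathbb{E}[e^{\lambda S}]\leq 2\lambda^2 V$ on this range of $\lambda$, so Markov's inequality gives
\begin{equation*}
\mathbb{P}(S > \epsilon) \leq \inf_{0<\lambda\leq i/2}\exp\{-\lambda\epsilon + 2\lambda^2 V\}.
\end{equation*}

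The unconstrained optimizer is $\lambda^* = \epsilon/(4V)$, with optimal exponent $-\epsilon^2/(8V)$. I would split into two cases according to whether $\lambda^*$ is feasible. If $\epsilon\leq 2iV$ then $\lambda^*\leq i/2$ is admissible, producing the Gaussian-type bound $\exp\{-\epsilon^2/(8V)\}$. Otherwise $\epsilon>2iV$, in which case I take the boundary $\lambda=i/2$, yielding exponent $-i\epsilon/2+i^2V/2$; the condition $\epsilon>2iV$ forces $i^2V/2\leq i\epsilon/4$, so this is bounded by $-i\epsilon/4$, giving $\exp\{-i\epsilon/4\}$. The lower-tail probability $\mathbb{P}(S<-\epsilon)$ is handled by the identical argument applied to $-X_j$; in fact it is strictly tighter, since $\log\mathbb{E}[e^{-\lambda X_j}]=\lambda/j-\log(1+\lambda/j)\leq(\lambda/j)^2/2$ for all $\lambda>0$. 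A union bound over the two tails then introduces the factor of two in the statement.

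There is no genuine technical obstacle here—the proof is essentially a textbook Bernstein inequality—the only real care is to choose the constant $2$ in the MGF bound deliberately (rather than the sharper $1$ available from the same Taylor estimate) so that the crossover between the Gaussian-type and exponential-type regimes lands exactly at $\epsilon=2iV$ as in the statement.
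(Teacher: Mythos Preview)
Your proposal is correct and follows essentially the same route as the paper: both arguments establish $\mathbb{E}[e^{t(E-1)}]\le e^{2t^2}$ for $0<t\le 1/2$ and then perform the identical constrained Chernoff optimization over $\lambda\in(0,i/2]$, with the two regimes arising from whether the unconstrained optimizer $\epsilon/(4V)$ lies below $i/2$. The only cosmetic difference is that the paper obtains the MGF bound via the moment inequality $|\mathbb{E}[(E-1)^m]|\le m!$ and then sums $\sum_{m\ge 2}t^m=t^2/(1-t)\le 2t^2$, whereas you use the closed form $\log\mathbb{E}[e^{t(E-1)}]=-t-\log(1-t)$ and bound its Taylor series directly; these are equivalent computations leading to the same inequality.
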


\begin{proof}
We first focus on bounding the moment generating function of a centered exponential random variable. Towards this end, suppose $X$ is an exponential random variable with unit rate. It is well-known that for any $m \in \mathbb{N}$
\begin{align*}
\mathbb{E}\left[\left(X - 1 \right)^m\right] = m! \sum_{j = 0}^m \frac{(-1)^j}{j!}.
\end{align*}
Hence $\left|\mathbb{E}\left[(X - 1)^m\right] \right| \leq m!$. Let $t \in (0, 1/2]$. Then
\begin{align*}
\mathbb{E}\left[e^{t(X - 1)} \right] = 1 + \sum_{m = 2}^\infty \frac{\mathbb{E}\left[(X - 1)^m \right] t^m}{m!} \leq  1 + \sum_{m = 2}^\infty t^m =& \frac{1}{1 - t} - t \\
=& 1 + \frac{t^2}{1 - t} \\
\leq& 1 + 2t^2 \\
\leq& e^{2t^2}.
\end{align*}
With the moment generating function bound in hand, we now turn to proving the concentration inequality. Let $s \in (0, i/2]$ and let $\epsilon > 0$. Apply Chernoff's inequality and independence of $E_j$, $j \in [n]$, to obtain that
\begin{align*}
\mathbb{P}\left( \sum_{j = i}^n \frac{\left(E_j - 1 \right)}{j} > \epsilon \right) \leq e^{-s\epsilon}\prod_{j = i}^n \mathbb{E}\left[e^{s\frac{(E_j - 1)}{j}} \right] \leq \exp\left\lbrace -s\epsilon + 2 s^2 \sum_{j = i}^n \frac{1}{j^2} \right\rbrace.
\end{align*}
With the constraint $s \leq i/2$, the above bound is minimized at
\begin{align*}
s^\star = \frac{\epsilon}{4 \sum_{j = i}^n \frac{1}{j^2}} \wedge \frac{i}{2}. 
\end{align*}
Hence 
\begin{align*}
\exp\left\lbrace -s^\star\epsilon + 2 (s^\star)^2 \sum_{j = i}^n \frac{1}{j^2}\right\rbrace \leq \exp\left\lbrace -s^\star\epsilon + \frac{s^\star\epsilon}{2} \right\rbrace =& \exp\left\lbrace -\frac{s^\star\epsilon}{2} \right\rbrace.
\end{align*}
Therefore
\begin{align*}
\mathbb{P}\left( \sum_{j = i}^n \frac{(E_j - 1)}{j}  > \epsilon \right) \leq 
\begin{cases}
\exp\left\lbrace - \frac{\epsilon^2}{8 \sum_{j = i}^n \frac{1}{j^2}}\right\rbrace, \qquad &\text{if } \epsilon \leq 2i \sum_{j = i}^n \frac{1}{j^2}, \\
\exp\left\lbrace - \frac{i}{4} \epsilon \right\rbrace, \qquad &\text{if } \epsilon > 2i \sum_{j = i}^n \frac{1}{j^2}.
\end{cases}
\end{align*}
The bound for $\mathbb{P}\left(  \sum_{j = i}^n \frac{(E_j - 1)}{j}  < -\epsilon \right)$ is achieved similarly and thus the proof is omitted.
\end{proof}

Using Lemma \ref{lem:conc_exp}, we may develop high probability upper and lower bounds for $W_{(i)}(n)$, $i \in [n]$. Such bounds will help simplify proofs quantifying the gaps between consecutive order statistics.

\begin{lemma}
\label{lem:bounds}
Let $i \in [n]$ and fix $\epsilon > \frac{2i}{\alpha} \sum_{j = i}^n \frac{1}{j^2}$. Then,
\begin{align}
\label{eq:upper}
&\mathbb{P}\left( W_{(i)}(n) >  U\left(e^{\sum_{j = i}^n \frac{1}{j} + \epsilon}\right)  \right) \leq 2\exp\left\lbrace - \frac{i}{4} \epsilon \right\rbrace, \\
\label{eq:lower}
&\mathbb{P}\left( W_{(i)}(n) <  U\left(e^{\sum_{j = i}^n \frac{1}{j} - \epsilon}\right)  \right) \leq 2\exp\left\lbrace - \frac{i}{4} \epsilon \right\rbrace.
\end{align}
\end{lemma}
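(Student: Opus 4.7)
The plan is to reduce the two tail-probability statements for $W_{(i)}(n)$ to one-sided concentration statements about a sum of centered exponential random variables, using the R\'enyi-type distributional representation \eqref{eq:orddist} together with the monotonicity of $U$. Concretely, by \eqref{eq:orddist},
\[
W_{(i)}(n) \overset{d}{=} U\!\left(\exp\!\left(\sum_{j=i}^n \tfrac{E_j}{j}\right)\right) = U\!\left(e^{H_i + S_i}\right),
\]
where $E_i,\dots,E_n$ are independent unit-rate exponentials, $H_i := \sum_{j=i}^n 1/j$, and $S_i := \sum_{j=i}^n (E_j-1)/j$.

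For the upper bound \eqref{eq:upper}, I would use that $U$, as a generalized inverse of a distribution function, is nondecreasing: the event $\{U(e^{H_i + S_i}) > U(e^{H_i + \epsilon})\}$ is then contained in $\{S_i > \epsilon\}$, so the probability in question is at most $\mathbb{P}(S_i > \epsilon)$. The hypothesis $\epsilon > \tfrac{2i}{\alpha}\sum_{j=i}^n 1/j^2$ is designed precisely to place $S_i$ in the exponential tail regime of Lemma \ref{lem:conc_exp}; its one-sided form then yields $\mathbb{P}(S_i > \epsilon) \le \exp(-i\epsilon/4)$, with the extra factor of $2$ in the stated bound absorbing the harmless cost of invoking the two-sided version. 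The lower bound \eqref{eq:lower} is proved symmetrically: monotonicity of $U$ reduces $\{W_{(i)}(n) < U(e^{H_i - \epsilon})\}$ to $\{S_i < -\epsilon\}$, and the matching lower-tail bound supplied by Lemma \ref{lem:conc_exp} closes the argument.

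In short, this is a routine monotone transformation followed by direct invocation of the concentration inequality already proved in Lemma \ref{lem:conc_exp}. There is essentially no obstacle here; the only subtlety worth checking is that the hypothesis on $\epsilon$ is what sends us into the linear-tail branch of Lemma \ref{lem:conc_exp} rather than the Gaussian branch, which is what justifies the clean exponential rate $e^{-i\epsilon/4}$ in both displays. This lemma is a book-keeping device whose real content will be unlocked later: once combined with the regular-variation asymptotics $U(tx)/U(t) \to x^\gamma$ and \eqref{eq:Ulower}, the two-sided control it provides on each $W_{(i)}(n)$ is exactly what is needed to lower-bound the spacings $W_{(i)}(n) - W_{(i+1)}(n)$ that drive the event $S(n)$ in Theorem \ref{thm:order}.
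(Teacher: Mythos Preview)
Your proposal is correct and follows exactly the paper's own proof: invoke the representation \eqref{eq:orddist}, use monotonicity of $U$ to reduce the event to a deviation of $\sum_{j=i}^n (E_j-1)/j$, and apply Lemma \ref{lem:conc_exp} in its linear-tail regime. Your observation that the containment $\{U(e^{H_i+S_i}) > U(e^{H_i+\epsilon})\} \subset \{S_i > \epsilon\}$ (rather than equality) suffices is in fact slightly more careful than the paper, which writes an equality without comment.
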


\begin{proof}
We only prove \eqref{eq:upper} since \eqref{eq:lower} is proved in a similar manner. Recall the representation in \eqref{eq:orddist} and employ montonicity of $U$ to achieve that
\begin{align*}
\mathbb{P}\left( U\left(e^{\sum_{j = 1}^n \frac{E_j}{j}}\right) >  U\left(e^{\sum_{j = i}^n \frac{1}{j} + \epsilon}\right) \right) =& \mathbb{P}\left( e^{\sum_{j = 1}^n \frac{E_j}{j}} >  e^{\sum_{j = i}^n \frac{1}{j} + \epsilon} \right)\\
=& \mathbb{P}\left( e^{\sum_{j = i}^n \frac{(E_j - 1)}{j}} > e^{\epsilon} \right) \\
=& \mathbb{P}\left( \sum_{j = i}^n \frac{(E_j - 1)}{j} > \epsilon \right) \\
\leq&  2\exp\left\lbrace - \frac{i}{4} \epsilon \right\rbrace,
\end{align*}
where we have appealed to Lemma \ref{lem:conc_exp}, given that $\epsilon > \frac{2i}{\alpha} \sum_{j = i}^n \frac{1}{j^2}$. 
\end{proof}
We now state a simple result that will prove useful in simplifying later concentration inequalities involving the degrees $D_i(n)$, $i \in [k(n)]$. 
\begin{lemma}
\label{lem:log}
Assume that $U$ satisfies \eqref{eq:rv} and \eqref{eq:Ulower}. Fix $L > 0$. Suppose $k(n) \log^\alpha(n)/n \rightarrow 0$ as $n \rightarrow \infty$. Then as $n \rightarrow \infty$
\begin{align*}
\mathbb{P}\left( W_{(k(n))}(n) > L \log(n) \right) \rightarrow 1.
\end{align*}
\end{lemma}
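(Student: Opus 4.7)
The plan is to combine the lower-tail bound for order statistics from Lemma \ref{lem:bounds} with the polynomial lower bound on $U$ provided by \eqref{eq:Ulower}, and then check that the growth hypothesis on $k(n)$ is exactly what is needed to beat the $\log n$ threshold.

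First I would fix any constant $\epsilon_0$ large enough that Lemma \ref{lem:bounds} applies for all sufficiently large values of $i = k(n)$; since $i \sum_{j=i}^n 1/j^2 \leq \pi^2/6$, the constant $\epsilon_0$ only needs to exceed a fixed multiple of $1/\alpha$. Applying \eqref{eq:lower} with $i = k(n)$ and this $\epsilon_0$ gives
\begin{align*}
\mathbb{P}\Bigl(W_{(k(n))}(n) < U\bigl(e^{\sum_{j=k(n)}^n 1/j - \epsilon_0}\bigr)\Bigr) \leq 2\exp\{-k(n)\epsilon_0/4\} \to 0,
\end{align*}
so it suffices to verify that $U(e^{\sum_{j=k(n)}^n 1/j - \epsilon_0}) > L\log n$ eventually. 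Using the elementary bound $\sum_{j=k(n)}^n 1/j \geq \log(n/k(n))$ and monotonicity of $U$, this reduces to showing $U(e^{-\epsilon_0} n/k(n)) > L\log n$ for all large $n$.

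Next I would invoke \eqref{eq:Ulower}. Since $k(n) \log^\alpha(n)/n \to 0$ forces $n/k(n) \to \infty$, for all large $n$ we may choose $t = t_0$ and $x = e^{-\epsilon_0} n/(t_0 k(n)) \geq 1$, yielding
\begin{align*}
U\bigl(e^{-\epsilon_0} n/k(n)\bigr) \geq (1-\eta)\, U(t_0)\, \bigl(e^{-\epsilon_0} n/(t_0 k(n))\bigr)^\gamma \;=\; C \,(n/k(n))^{\gamma}
\end{align*}
for a constant $C = C(\eta, t_0, U(t_0), \epsilon_0, \gamma) > 0$. Finally, the hypothesis $k(n)\log^\alpha(n)/n \to 0$ is equivalent (taking $\gamma = 1/\alpha$ powers) to $(n/k(n))^\gamma / \log n \to \infty$, so $C(n/k(n))^\gamma > L\log n$ for all sufficiently large $n$, completing the argument.

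There is no real obstacle here beyond correctly invoking \eqref{eq:Ulower}: once we have the high-probability lower bound $W_{(k(n))}(n) \geq U(c\, n/k(n))$ coming from Lemma \ref{lem:bounds}, the non-decreasing-slowly-varying-part hypothesis converts $U$ at argument $n/k(n)$ into a genuine polynomial lower bound $(n/k(n))^\gamma$, and the growth condition on $k(n)$ is precisely calibrated so that this polynomial dominates $\log n$.
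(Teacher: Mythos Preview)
Your proof is correct. The overall architecture is the same as the paper's: obtain a high-probability lower bound of the form $W_{(k(n))}(n)\gtrsim U(c\,n/k(n))$, then use \eqref{eq:Ulower} to convert $U(c\,n/k(n))$ into a polynomial $(n/k(n))^\gamma$ and check that the growth assumption makes this dominate $\log n$.

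The only genuine difference is in how the first step is carried out. The paper appeals to the external consistency result $W_{(k(n))}(n)/U(n/k(n))\xrightarrow{p}1$ (Theorem~4.2 of \cite{resnick2007heavy}), which immediately gives $\mathbb{P}(W_{(k(n))}(n)>(1-\delta)U(n/k(n)))\to 1$. You instead use the paper's own Lemma~\ref{lem:bounds} with a fixed $\epsilon_0$ to obtain the concentration bound $2\exp\{-k(n)\epsilon_0/4\}\to 0$, together with the elementary estimate $\sum_{j=k(n)}^n 1/j\geq \log(n/k(n))$. Your route is slightly longer but entirely self-contained within the tools the paper develops, whereas the paper's version is shorter but relies on an outside citation. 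Both lead to the identical endgame via \eqref{eq:Ulower}.
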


\begin{proof}
From Theorem 4.2 of \cite{resnick2007heavy}, we have that $W_{(k(n))}(n)/U(n/k(n)) \xrightarrow{p} 1$ and thus for any $\delta  \in (0, 1)$
\begin{align*}
\mathbb{P}\left(W_{(k(n))}(n) > (1 - \delta) U(n/k(n)) \right) \rightarrow 1, \qquad \text{as }n \rightarrow \infty.
\end{align*}
Hence, it suffices that $U(n/k(n))/\log(n) \rightarrow \infty$, which is clear since from \eqref{eq:Ulower} we have that $U(n/k(n)) \geq (1 - \eta)\left(\frac{n}{t_0 k(n)}\right)^{1/\alpha}U(t_0)$ for $n$ sufficiently large. 
\end{proof}

\subsection{Order statistic spacings}\label{sec:space_proof}

In order to ensure that the alignment of upper order statistics is achieved, we require that the degrees are in general close to their weights while consecutive weights are sufficiently far apart. This section deals with the latter requirement. That is, we establish an upper bound on the number of upper order statistics such that consecutive weights repel at an appropriate rate.

\begin{lemma}
\label{lem:w_repel}
Assume that $U$ satisfies \eqref{eq:rv} and \eqref{eq:Ulower}. Suppose $k(n)$ is such that $k(n) \rightarrow \infty$ and $k(n) \log^{\alpha}(n) /n^{\frac{1}{4\alpha + 1}} \rightarrow 0$ as $n \rightarrow \infty$. Then
\begin{align*}
\mathbb{P}&\left( \forall i \in [k(n)],  W_{(i)}(n) - W_{(i + 1)}(n) >  2\sqrt{5\log (n) W_{(i)}(n)}  \right) \rightarrow 1.
\end{align*} 
\end{lemma}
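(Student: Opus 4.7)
The approach is to use the R\'enyi representation~\eqref{eq:orddist} to decouple the spacing $W_{(i)}(n)-W_{(i+1)}(n)$ into (conditionally) independent pieces: a multiplicative increment determined by $E_i$ alone, and an anchor $W_{(i+1)}(n)$ depending only on $E_{i+1},\dots,E_n$. Setting $T_{i+1}=\exp\bigl(\sum_{j=i+1}^n E_j/j\bigr)$, so that $W_{(i+1)}(n)=U(T_{i+1})$ and $W_{(i)}(n)=U(T_{i+1}e^{E_i/i})$, I would apply~\eqref{eq:Ulower} with $t=T_{i+1}$ (which is at least $t_0$ on a high-probability event, by Lemma~\ref{lem:log}) and $x=e^{E_i/i}\geq 1$, together with $e^x-1\geq x$, to obtain
\[
W_{(i)}(n)-W_{(i+1)}(n) \;\geq\; (1-\eta)\bigl(e^{\gamma E_i/i}-1\bigr)\,W_{(i+1)}(n) \;\geq\; \frac{(1-\eta)\gamma}{i}\,E_i\,W_{(i+1)}(n).
\]

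Next I would reduce the target bound $2\sqrt{5\log(n)W_{(i)}(n)}$ to one involving only $W_{(i+1)}(n)$. Applying the high-probability upper/lower tails in Lemma~\ref{lem:bounds} with $\epsilon_i$ of order $\sqrt{\log n/i}$ (the Gaussian branch of Lemma~\ref{lem:conc_exp}), and combining with~\eqref{eq:Ulower} and~\eqref{eq:rv}, yields on a high-probability event the double estimate
\[
c_1(n/i)^{\gamma}\;\leq\;W_{(i+1)}(n)\;\leq\;W_{(i)}(n)\;\leq\;c_2(n/i)^{\gamma},\qquad i\in[k(n)],
\]
for suitable positive constants $c_1,c_2$ depending on $\alpha$ and $\eta$. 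On this event, the required spacing inequality reduces to $E_i\geq \delta_i$ with
\[
\delta_i \;=\; C\,i^{1+\gamma/2}\sqrt{\log n}/n^{\gamma/2}.
\]

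Finally, since $E_i$ is independent of $(E_{i+1},\dots,E_n)$, and therefore of both $W_{(i+1)}(n)$ and of the event controlling the ratios, a union bound together with $\mathbb{P}(E_i<\delta_i)\leq\delta_i$ gives
\[
\mathbb{P}\bigl(\exists\,i\in[k(n)]:E_i<\delta_i\bigr) \;\leq\; \sum_{i=1}^{k(n)}\delta_i \;\asymp\; k(n)^{2+\gamma/2}\sqrt{\log n}/n^{\gamma/2}.
\]
Raising this to the $2/(4+\gamma)$ power and using $\gamma=1/\alpha$ shows the bound is $o(1)$ whenever $k(n)\log^{\alpha/(4\alpha+1)}(n)/n^{1/(4\alpha+1)}\to 0$, a condition strictly weaker than the stated hypothesis $k(n)\log^{\alpha}(n)/n^{1/(4\alpha+1)}\to 0$. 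The main obstacle I anticipate is the bookkeeping of the several high-probability events that must be combined---the uniform lower bound $W_{(i+1)}(n)\gtrsim(n/i)^{\gamma}$, the matching upper bound on $W_{(i)}(n)$, the slowly varying factor implicit in the regular-variation step, and the anchor $T_{i+1}\geq t_0$---and arranging them so that the final step, where the independent lower-tail bound on $E_i$ is applied, is genuinely independent of the conditioning event. The structural fact that makes the union bound feasible is precisely that $E_i$ appears nowhere in the expression for $\delta_i$.
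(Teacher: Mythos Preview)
Your proposal is correct and takes essentially the same approach as the paper: both use the R\'enyi representation together with \eqref{eq:Ulower} to obtain a spacing lower bound driven by the independent increment $E_i$, replace the random anchor by a deterministic one on a high-probability event (the paper does this via $A_i(n)=\{W_{(i)}(n)\geq U(n/(a(n)i))\}$ for a slowly growing $a(n)$), and finish with a union bound on $\{E_i<\delta_i\}$. The only difference is cosmetic: the paper factors the spacing as $(1-\eta)\,W_{(i)}(n)\,(1-e^{-\gamma E_i/i})$ rather than $(1-\eta)\,(e^{\gamma E_i/i}-1)\,W_{(i+1)}(n)$, so that $W_{(i)}(n)$ cancels against the target $2\sqrt{5\log(n)W_{(i)}(n)}$ and only a one-sided lower bound on $W_{(i)}(n)$ is required, slightly reducing the bookkeeping you flag as the main obstacle.
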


\begin{proof}
Define the event
\begin{align*}
T(n) = \left\lbrace e^{\sum_{j = k(n) + 1}^n\frac{E_j}{j}} \geq t_0 \right\rbrace. 
\end{align*}
Clearly, $P(T(n)) \rightarrow 1$ as $n \rightarrow \infty$. Thus it suffices to prove that as $n \rightarrow \infty$
\begin{align}
\label{eq:spacinggoal}
\mathbb{P}\left( \exists i \in [k(n)] : U\left(e^{\sum_{j = i}^n\frac{E_j}{j}} \right) -  U\left(e^{\sum_{j = i + 1}^n\frac{E_j}{j}} \right) < 2\sqrt{5\log (n) U\left(e^{\sum_{j = i}^n\frac{E_j}{j}} \right)}, T(n) \right) \rightarrow 0.
\end{align}
Note that when $t \geq t_0$, we may employ \eqref{eq:Ulower} to state that for $x \geq 1$
\begin{align*}
U(tx) - U(t) =& U(tx) - x^{-\gamma}U(tx) + x^{-\gamma}U(tx) - U(t) \\
=& U(tx)\left( \left(1 - x^{-\gamma}\right) +  \frac{U(t)}{U(tx)}\left(\frac{\ell(tx)}{\ell(t)} - 1 \right)\right) \\
\geq& U(tx)\left( \left(1 - x^{-\gamma}\right) -\eta \frac{U(t)}{U(tx)}\left(1 - x^{-\gamma} \right)\right) \\
\geq&  (1 - \eta)U(tx)\left(1 - x^{-\gamma}\right),
\end{align*}
where in the last line we have employed the fact that $U(t)/U(tx) \leq 1$. Hence, on the set $T(n)$
\begin{align*}
 U\left(e^{\sum_{j = i}^n\frac{E_j}{j}} \right) -  U\left(e^{\sum_{j = i + 1}^n\frac{E_j}{j}} \right) \geq& (1 - \eta)U\left(e^{\sum_{j = i}^n\frac{E_j}{j}} \right)\left(1 - e^{-\frac{E_i}{\alpha i}} \right).
\end{align*}
Thus
\begin{align*}
\sum_{i = 1}^{k(n)}&\mathbb{P}\left(  U\left(e^{\sum_{j = i}^n\frac{E_j}{j}} \right) -  U\left(e^{\sum_{j = i + 1}^n\frac{E_j}{j}} \right) < 2\sqrt{5\log (n) U\left(e^{\sum_{j = i}^n\frac{E_j}{j}} \right)}, T(n)  \right) \\
\leq& \sum_{i = 1}^{k(n)}\mathbb{P}\left(  U\left(e^{\sum_{j = i}^n\frac{E_j}{j}} \right)\left(1 - e^{-\frac{E_i}{\alpha i}} \right) < \frac{2}{1 - \eta}\sqrt{5\log (n) U\left(e^{\sum_{j = i}^n\frac{E_j}{j}} \right)}  \right) \\
=& \sum_{i = 1}^{k(n)}\mathbb{P}\left(  1 - e^{-\frac{E_i}{\alpha i}} < \frac{2}{1 - \eta}\sqrt{5\log (n)/ U\left(e^{\sum_{j = i}^n\frac{E_j}{j}} \right)}  \right).
\end{align*}
Let $a(n)$ be a monotone increasing sequence tending towards infinity as $n \rightarrow \infty$ such that $a(n) =  o\left( n/\left(\log^\alpha(n) k^{4\alpha + 1}(n) \right) \right)$. We then have that for $i \in [k(n)]$
\begin{align}
\label{eq:andiv}
\frac{n}{a(n)i} \geq \frac{n}{a(n)k(n)} = \frac{n}{a(n)k^{4\alpha + 1}(n)\log^\alpha(n)} \log^\alpha(n) k^{4\alpha}(n) \rightarrow \infty.
\end{align}
We partition on the events
\begin{align*}
A_i(n) =& \left\lbrace U\left(e^{\sum_{j = i}^n\frac{E_j}{j}} \right) \geq U\left(\frac{1}{a(n)}\frac{n}{i} \right) \right\rbrace,
\end{align*}
for $i \in [k(n)]$. See that
\begin{align*}
\sum_{i = 1}^{k(n)}&\mathbb{P}\left( 1 - e^{-\frac{E_i}{\alpha i}}  < \frac{2}{1 - \eta}\sqrt{5 \log (n)/U\left(e^{\sum_{j = i}^n\frac{E_j}{j}} \right)}  \right) \\
\leq& \sum_{i = 1}^{k(n)}\mathbb{P}\left( 1 - e^{-\frac{E_i}{\alpha i}}  < \frac{2}{1 - \eta}\sqrt{5 \log (n)/U\left(e^{\sum_{j = i}^n\frac{E_j}{j}} \right)}, A_i(n) \right) + \sum_{i = 1}^{k(n)}\mathbb{P}(A^c_i(n)) \\
\leq& \sum_{i = 1}^{k(n)}\mathbb{P}\left( 1 - e^{-\frac{E_i}{\alpha i}}  < \frac{2}{1 - \eta}\sqrt{5 \log (n)/U\left(\frac{1}{a(n)} \frac{n}{i} \right)} \right) + \sum_{i = 1}^{k(n)}\mathbb{P}(A^c_i(n)) \\
=& \sum_{i = 1}^{k(n)}r_i(n) + \sum_{i = 1}^{k(n)}s_i(n). 
\end{align*}
Define $m_i(n) = \frac{2}{1 - \eta}\sqrt{5 \log (n)/U\left(\frac{1}{a(n)} \frac{n}{i} \right)}$. Note that for $i \in [k(n)]$, $m_i(n) \rightarrow 0$ since by  \eqref{eq:Ulower} and \eqref{eq:andiv}, for $n$ sufficiently large
\begin{align*}
U\left(\frac{1}{a(n)} \frac{n}{i} \right) \geq U\left(\frac{1}{a(n)} \frac{n}{k(n)} \right) \geq (1-\eta)U\left(\frac{n}{a(n)k^{4\alpha + 1}(n)\log^\alpha(n)}\right) \log(n) k^{4}(n).
\end{align*}
For $r_i(n)$, suppose that $n$ is large enough so that $m_{k(n)}(n) < 1$. Then
\begin{align*}
\sum_{i = 1}^{k(n)}r_i(n) =& \sum_{i = 1}^{k(n)}\mathbb{P}\left(e^{\frac{E_i}{\alpha i}}  < (1 - m_i(n))^{-1} \right) \\
=& \sum_{i = 1}^{k(n)}\left(1 - \left(1 - m_i(n) \right)^{\alpha i} \right), \\
\intertext{and by the mean value theorem}
\leq& \sum_{i = 1}^{\floor{1/\alpha}} \left(1 - \left( 1 - m_i(n) \right)^{\alpha i } \right) + \alpha \sum_{i = \floor{1/\alpha} + 1}^{k(n)} i m_i(n) \\
\leq& \floor{1/\alpha} \left(1 - \left( 1 - m_{k(n)}(n) \right) \right) + \alpha \sum_{i = \floor{1/\alpha} + 1}^{k(n)} i m_i(n) \\
=& \floor{1/\alpha} m_{k(n)}(n) + \alpha \sum_{i = \floor{1/\alpha} + 1}^{k(n)} i m_i(n). 
\end{align*}
By \eqref{eq:Ulower}, $U\left(\frac{1}{a(n)} \frac{n}{i} \right) = U\left(t_0 \frac{1}{a(n) t_0} \frac{n}{i} \right) \geq (1 - \eta)\left( \frac{1}{a(n) t_0} \frac{n}{i} \right)^{1/\alpha} U(t_0)$ for $n$ sufficiently large and hence
\begin{align*}
\sum_{i = 1}^{k(n)}r_i(n) \leq& \floor{1/\alpha} m_{k(n)}(n) + \alpha \sum_{i = \floor{1/\alpha} + 1}^{k(n)} i m_i(n) \\
=& \frac{2}{\sqrt{1 - \eta}}\floor{1/\alpha}\sqrt{5 t_0^{1/\alpha}\log (n)\left(a(n)\frac{k(n)}{n} \right)^{1/\alpha}/U(t_0)} \\
&+ \frac{2}{\sqrt{1 - \eta}}\alpha \sum_{i = \floor{1/\alpha} + 1}^{k(n)} i\sqrt{5 t_0^{1/\alpha}\log (n)\left(a(n)\frac{i}{n} \right)^{1/\alpha}/U(t_0)} \\
=&  O\left(\sqrt{\log (n)  \left(a(n) \frac{k(n)}{n} \right)^{1/\alpha}} \right) + O\left( \sqrt{a^{1/\alpha}(n)\log (n) \frac{k(n)^{\frac{4\alpha + 1}{\alpha}}}{n^{1/\alpha}}  }\right)
\end{align*}
Since $a(n) = o\left( n/\left(\log^{\alpha}(n) k^{4\alpha + 1}(n) \right) \right)$, $\sum_{i = 1}^{k(n)}r_i(n) \rightarrow 0$ as $n \rightarrow \infty$. We now turn to the terms $s_i(n)$. Define $\epsilon_i(n) = \sum_{j = i}^n \frac{1}{j} - \log (n/ia(n))$. Note that we may invoke Lemma \ref{lem:bounds} since $\epsilon_i(n) \geq \log(\frac{n+1}{n}a(n)) \rightarrow \infty$ and thus for $n$ sufficiently large
\begin{align*}
\sum_{i = 1}^{k(n)}s_i(n) =& \sum_{i = 1}^{k(n)}\mathbb{P}\left(U\left(e^{\sum_{j = i}^n\frac{E_j}{j}} \right) < U\left(\frac{1}{a(n)}\frac{n}{i} \right)\right) \\
=& \sum_{i = 1}^{k(n)}\mathbb{P}\left(U\left(e^{\sum_{j = i}^n\frac{E_j}{j}} \right) < U\left(e^{\sum_{j = i}^n\frac{1}{j} - \left(\sum_{j = i}^n\frac{1}{j} - \log(\frac{n}{a(n)i} \right)} \right) \right) \\
=&\sum_{i = 1}^{k(n)}2\exp\left\lbrace - \frac{i}{4} \log(a(n)) \right\rbrace, \\
\intertext{and for $n$ large enough so that $a(n) > 1$}
\leq& 2\left(-1 + \frac{1}{1 - a^{- \frac{1}{4}}(n)} \right) = 2\left(\frac{a^{- \frac{1}{4}}(n)}{1 - a^{- \frac{1}{4}}(n)} \right).
\end{align*}
Thus  $\sum_{i = 1}^{k(n)}s_i(n) \rightarrow 0$ and the proof is complete.
\end{proof}

\subsection{Degree concentration}\label{sec:conc_proof}

Recall that in order to achieve the alignment of the upper order statistics, we require that the degrees of nodes with large weights sufficiently concentrate around said weights. This section develops concentration inequalities for the degrees in the Norros-Reittu and Chung-Lu models. The first result concerns the rate at which $D_i(n)$ concentrates around $W_{(i)}(n)$ for $i \in [n]$ in the Norros-Reittu model. It is a straight-forward application of Poisson concentration results for large rates \citep[see][for example]{boucheron2013concentration, pollard2002user}
\begin{lemma}
\label{lem:Poi_conc}
For any $i \in [n]$, under the Norros-Reittu model
\begin{align}
\mathbb{P}\left( \left| D_i(n) - W_{(i)}(n) \right| \geq \sqrt{5\log (n) W_{(i)}(n)} \vee  5\log (n) \mid \{W_{(i)}(n)\}_{i \in [n]} \right) \leq n^{-5/4}.
\end{align}
\end{lemma}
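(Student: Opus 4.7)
The plan is to recognise that, once we condition on the weight vector, the entire statement reduces to a one-parameter Poisson concentration inequality. Indeed, by \eqref{eq:d}, under the Norros-Reittu model $D_i(n)$ given $\{W_{(j)}(n)\}_{j \in [n]}$ is $\text{Poisson}(W_{(i)}(n))$. So I would simply invoke a standard Bennett/Bernstein-type tail bound for a Poisson variable and then verify that the specific choice $t = \sqrt{5\log(n)\,W_{(i)}(n)} \vee 5\log(n)$ drives the exponent past the required threshold.

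Concretely, starting from Chernoff's method applied to the Poisson moment generating function (together with the classical bound $h(u) \geq u^2/(2+2u/3)$ on $h(u)=(1+u)\log(1+u)-u$), I would use: for $X \sim \text{Poisson}(\lambda)$ and any $t>0$,
\begin{equation*}
\mathbb{P}(X - \lambda \geq t) \leq \exp\!\left(-\frac{t^2}{2\lambda + 2t/3}\right), \qquad \mathbb{P}(X - \lambda \leq -t) \leq \exp\!\left(-\frac{t^2}{2\lambda}\right),
\end{equation*}
with the lower-tail bound being vacuously true when $t \geq \lambda$ (since $X\geq 0$). Setting $\lambda = W_{(i)}(n)$, the defining maximum in $t$ gives both $t^2 \geq 5\log(n)\,W_{(i)}(n)$ and $t \geq 5\log(n)$, which is what I would exploit.

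For the lower tail, only the first property is needed: it yields $t^2/(2W_{(i)}(n)) \geq \tfrac{5}{2}\log(n)$, so the lower-tail contribution is at most $n^{-5/2}$. For the upper tail I would split into two short cases. If $W_{(i)}(n) \geq 5\log(n)$, then $t = \sqrt{5\log(n)\,W_{(i)}(n)} \leq W_{(i)}(n)$, and $2W_{(i)}(n) + 2t/3 \leq \tfrac{8}{3}W_{(i)}(n)$; if $W_{(i)}(n) < 5\log(n)$, then $t = 5\log(n) > W_{(i)}(n)$ and $2W_{(i)}(n) + 2t/3 \leq \tfrac{8}{3}t$. Either way the exponent is at least $\tfrac{15}{8}\log(n)$, giving an upper-tail bound of $n^{-15/8}$.

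Combining the two tails gives $n^{-15/8} + n^{-5/2} \leq n^{-5/4}$ for all $n$ large enough (a crude check shows $n \geq 4$ suffices). I do not anticipate any real obstacle: the whole argument is a conditional Chernoff bound, and the only bookkeeping to watch is making sure the Bernstein-type constants line up so that \emph{both} regimes of the max-defined $t$ push the exponent past $\tfrac{5}{4}\log n$.
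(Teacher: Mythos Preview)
Your proposal is correct and follows essentially the same route as the paper: condition to reduce to a Poisson tail bound, then split on whether $W_{(i)}(n)\gtrless 5\log n$ to check the exponent exceeds $\tfrac{5}{4}\log n$. The only cosmetic difference is that the paper quotes the single two-sided Bennett bound $\exp\!\big(-t^2/\{2(\lambda+t)\}\big)$ and lands exactly at $n^{-5/4}$, whereas you use the slightly sharper Bernstein/upper-lower split and pick up the harmless ``$n$ large enough'' caveat when summing $n^{-15/8}+n^{-5/2}$.
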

\begin{proof}
See that for any $i \in [n]$
\begin{align*}
\mathbb{P}&\left( \left| D_i(n) - W_{(i)}(n) \right| \geq \sqrt{5\log (n) W_{(i)}(n)} \vee  5\log (n) \mid \{W_{(i)}(n)\}_{i \in [n]} \right) \\
\leq& \exp\left\lbrace - \frac{\left(  \sqrt{5\log (n) W_{(i)}(n)} \vee  5\log (n) \right)^2}{2\left(W_{(i)}(n) + \sqrt{5\log (n) W_{(i)}(n)} \vee  5\log (n) \right) } \right\rbrace.
\end{align*}
When $W_{(i)}(n) > 5\log (n)$, 
\begin{align*}
\frac{\left(  \sqrt{5\log (n) W_{(i)}(n)} \vee  5\log (n) \right)^2}{2\left(W_{(i)}(n) + \sqrt{5\log (n) W_{(i)}(n)} \vee  5\log (n) \right) } =& \frac{5\log (n) W_{(i)}(n)}{2\left(W_{(i)}(n) + \sqrt{5\log (n) W_{(i)}(n)}  \right) } \\
>&   \frac{5\log (n) W_{(i)}(n)}{4W_{(i)}(n) } \\
=& \frac{5}{4}\log(n).
\end{align*}
On the other hand, when  $W_{(i)}(n) \leq 5\log(n)$
\begin{align*}
\frac{\left(  \sqrt{5\log (n) W_{(i)}(n)} \vee  5\log (n) \right)^2}{2\left(W_{(i)}(n) + \sqrt{5\log (n) W_{(i)}(n)} \vee  5\log (n) \right) } =& \frac{5^2\log^2 (n)}{2\left(W_{(i)}(n) + 5\log (n)  \right) } \\
\geq& \frac{5^2\log^2 (n)}{20\log (n) } \\
=& \frac{5}{4}\log(n).
\end{align*}
Hence, overall
\begin{align*}
\mathbb{P}\left( \left| D_i(n) - W_{(i)}(n) \right| \geq \sqrt{5\log (n) W_{(i)}(n)} \vee  5\log (n) \mid \{W_{(i)}(n)\}_{i \in [n]} \right) \leq n^{-5/4}.
\end{align*}
\end{proof}

From Lemmas \ref{lem:log} and \ref{lem:Poi_conc} we achieve the following general result regarding concentration for the degrees of the nodes assigned the largest weights. 

\begin{lemma}
\label{lem:d_conc}
Assume that $U$ satisfies \eqref{eq:rv} and \eqref{eq:Ulower}. Suppose $k(n) \log^\alpha(n)/n \rightarrow 0$. Then, under the Norros-Reittu model, as $n \rightarrow \infty$
\begin{align*}
\mathbb{P}&\left( \forall i \in [k(n)],  \left| D_i(n) - W_{(i)}(n) \right| < \sqrt{5\log (n) W_{(i)}(n)}  \right) \rightarrow 1.
\end{align*} 
\end{lemma}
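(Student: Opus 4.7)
The plan is to combine Lemma~\ref{lem:log} with Lemma~\ref{lem:Poi_conc} through a union bound over $i \in [k(n)]$. The key observation is that Lemma~\ref{lem:Poi_conc} bounds the tail with the quantity $\sqrt{5\log(n)\, W_{(i)}(n)} \vee 5\log(n)$, whereas the statement to prove only involves the square-root branch. These two quantities coincide precisely when $W_{(i)}(n) \geq 5\log(n)$, so the strategy is to first ensure that all of the top $k(n)$ weights exceed $5\log(n)$ with high probability, which collapses the maximum to the square-root term and lets Lemma~\ref{lem:Poi_conc} apply directly.

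First I would invoke Lemma~\ref{lem:log} with $L = 5$ (legitimate because the hypothesis $k(n)\log^{\alpha}(n)/n \to 0$ is exactly what that lemma requires) to conclude that the event $G(n) := \{W_{(k(n))}(n) > 5\log(n)\}$ satisfies $\mathbb{P}(G(n)) \to 1$. Since the weights are ordered, on $G(n)$ we have $W_{(i)}(n) \geq W_{(k(n))}(n) > 5\log(n)$ for every $i \in [k(n)]$, and hence $\sqrt{5\log(n)\, W_{(i)}(n)} \geq 5\log(n)$, so the $\vee 5\log(n)$ in Lemma~\ref{lem:Poi_conc} is redundant on this event.

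Next I would decompose
\[
\mathbb{P}\Bigl(\exists\, i \in [k(n)]:\ |D_i(n) - W_{(i)}(n)| \geq \sqrt{5\log(n)\, W_{(i)}(n)}\Bigr) \leq \mathbb{P}(G(n)^c) + \mathbb{P}(\text{failure} \cap G(n)),
\]
and handle the second term by conditioning on the weights, using the tower property and a union bound over the $k(n)$ nodes. Lemma~\ref{lem:Poi_conc} gives a conditional bound of $n^{-5/4}$ per index, so
\[
\mathbb{P}(\text{failure} \cap G(n)) \leq \mathbb{E}\Bigl[\mathbf{1}_{G(n)} \sum_{i=1}^{k(n)} n^{-5/4}\Bigr] \leq k(n)\, n^{-5/4}.
\]
The hypothesis $k(n)\log^{\alpha}(n)/n \to 0$ forces $k(n) = o(n)$, hence $k(n)\, n^{-5/4} \to 0$, and together with $\mathbb{P}(G(n)^c) \to 0$ this yields the claim.

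I do not expect any genuine obstacle: the result is essentially a packaging of the two preceding lemmas. The only mildly subtle piece is the bookkeeping step that uses the uniform lower bound on $W_{(k(n))}(n)$ (from Lemma~\ref{lem:log}) to collapse the maximum in Lemma~\ref{lem:Poi_conc} onto its square-root branch; once that is noted, the union bound together with the summability $k(n) = o(n^{5/4})$ finishes the argument.
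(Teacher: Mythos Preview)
Your proposal is correct and matches the paper's own proof essentially step for step: invoke Lemma~\ref{lem:log} with $L=5$ to reduce to the event $\{W_{(k(n))}(n)\ge 5\log n\}$, observe that on this event the $\vee 5\log n$ in Lemma~\ref{lem:Poi_conc} collapses to the square-root branch, then apply a union bound over $i\in[k(n)]$ together with the conditional $n^{-5/4}$ estimate from Lemma~\ref{lem:Poi_conc} to get the $k(n)n^{-5/4}\to 0$ bound. There is no gap and no substantive difference from the paper's argument.
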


\begin{proof}
Note that from Lemma \ref{lem:log}, $\mathbb{P}\left( W_{(k(n))}(n) > 5\log(n) \right) \rightarrow 1$ so that is suffices to show that
\begin{align*}
\mathbb{P}&\left( \exists i \in [k(n)] :\left| D_i(n) - W_{(i)}(n) \right| \geq \sqrt{5\log (n) W_{(i)}(n)}  , W_{(k(n))}(n) \geq 5\log (n) \right) \rightarrow 0,
\end{align*}
as $n \rightarrow \infty$. Using the union bound, we may upper bound the previous quantity by
\begin{align*}
\sum_{i = 1}^{k(n)} \mathbb{P}&\left( \left| D_i(n) - W_{(i)}(n) \right| \geq \sqrt{5\log (n) W_{(i)}(n)}, W_{(k(n))}(n) \geq 5\log(n) \right) \\
\leq& \sum_{i = 1}^{k(n)} \mathbb{P}\left( \left| D_i(n) - W_{(i)}(n) \right| \geq \sqrt{5\log (n) W_{(i)}(n)}, W_{(i)}(n) \geq 5\log(n) \right), \\
\intertext{condition on $\{W_{(i)}(n)\}_{i \in [n]}$ and apply Lemma \ref{lem:Poi_conc} to achieve that}
=& \mathbb{E}\left[ \mathbb{P}\left(  \left| D_i(n) - W_{(i)}(n) \right| \geq \sqrt{5\log (n) W_{(i)}(n)}\mid \{W_{(i)}(n)\}_{i \in [n]} \right) 1_{\left\lbrace  W_{(i)}(n) \geq 5 \log (n) \right\rbrace}\right] \\
\leq& \mathbb{E}\left[ n^{-5/4} 1_{\left\lbrace W_{(i)}(n) \geq 5 \log (n) \right\rbrace}\right] \\
\leq&  n^{-5/4}. 
\end{align*}
\end{proof}

We now consider the rate at which $D_i(n)$ concentrates around $W_{(i)}(n)$ for $i \in [n]$ in the Chung-Lu model. However, we first require that $\mathbb{E}\left[D_i(n) | \{W_{(i)}(n)\}_{i \in [n]} \right] = W_{(i)}(n)$, $i \in [k(n)]$ with high probability. The following lemma ensures that this is possible when $\alpha > 2$. 
\begin{lemma}
Assume that $U$ satisfies \eqref{eq:rv}. Suppose $\alpha > 2$. As $n \rightarrow \infty$,
\label{lem:maxbound}
\begin{align*}
\mathbb{P}\left( W^2_{(1)}(n) \leq L(n)\right) \rightarrow 1.
\end{align*}
\end{lemma}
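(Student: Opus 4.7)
The plan is to combine the strong law of large numbers for $L(n)$ with a standard extreme-value bound on the maximum weight $W_{(1)}(n)$. Since $\alpha > 2$, the regularly varying distribution $F$ has finite mean $\mu := \mathbb{E}[W_1] \in (0,\infty)$, so Kolmogorov's SLLN gives $L(n)/n \to \mu$ almost surely. It therefore suffices to show $W_{(1)}^2(n)/n \xrightarrow{p} 0$, since then
\begin{align*}
\frac{W_{(1)}^2(n)}{L(n)} = \frac{W_{(1)}^2(n)}{n}\cdot\frac{n}{L(n)} \xrightarrow{p} 0,
\end{align*}
which is strictly stronger than the stated event having probability tending to one.

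For the maximum, the regular variation condition \eqref{eq:rv} together with the Fisher--Tippett--Gnedenko theorem yields $W_{(1)}(n)/U(n) \Rightarrow Y$, where $Y$ follows a Fr\'echet distribution with shape $\alpha$ (see, e.g., Proposition~1.11 of \cite{resnick2007heavy}); in particular $W_{(1)}(n) = O_p(U(n))$. Writing $U(n) = n^\gamma \ell(n)$ with $\ell$ slowly varying, we obtain $W_{(1)}^2(n) = O_p(n^{2\gamma}\ell^2(n))$. By Potter's bounds, for every $\epsilon > 0$ there exists $C_\epsilon > 0$ such that $\ell(n) \leq C_\epsilon n^\epsilon$ for all $n$ sufficiently large. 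The hypothesis $\alpha > 2$ means $\gamma < 1/2$, so we may fix any $\epsilon \in (0, (1-2\gamma)/2)$, giving $W_{(1)}^2(n)/n = O_p(n^{2\gamma + 2\epsilon - 1}) \to 0$ in probability, as required.

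There is no genuinely difficult step: the argument merely assembles the SLLN with a textbook extreme-value estimate for the maximum, and the tail condition $\alpha > 2$ is precisely what forces $U(n)^2 / n \to 0$. The only minor bookkeeping point is that the slowly varying factor $\ell$ must be absorbed using Potter's inequality, which is why the exponent comparison $2\gamma + 2\epsilon < 1$ is used rather than the more transparent $2\gamma < 1$. Alternatively, one could bypass Fisher--Tippett entirely and deduce $W_{(1)}(n) = O_p(n^{\gamma+\epsilon})$ directly from Lemma~\ref{lem:bounds} applied with $i = 1$ and, say, $\epsilon_n = \log n$, which yields the same conclusion.
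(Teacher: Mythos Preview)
Your proof is correct. The overall strategy matches the paper's---show that $L(n)$ grows linearly while $W_{(1)}^2(n)$ grows strictly sublinearly---but the implementations diverge in the details. The paper introduces an intermediate scale $n^\delta$ with $\delta \in (2/\alpha,1)$ and bounds
\[
\mathbb{P}\bigl(W_{(1)}^2(n) > L(n)\bigr) \leq \mathbb{P}\bigl(W_{(1)}^2(n) > n^\delta\bigr) + \mathbb{P}\bigl(L(n) < n^\delta\bigr),
\]
handling the first term by the union bound and regular variation of $1-F$, and the second by Chebyshev's inequality (using that $\alpha>2$ gives finite variance). You instead work with the ratio directly, invoking the SLLN for $L(n)/n$ and the Fisher--Tippett limit for $W_{(1)}(n)/U(n)$, then absorbing the slowly varying factor via Potter's bounds. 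Your route is arguably cleaner since it avoids choosing an auxiliary exponent $\delta$, and it only needs a finite first moment rather than finite variance for the sum; the paper's route is slightly more self-contained in that it does not cite the extreme-value limit theorem. Either way the result is routine once $\alpha>2$ is available.
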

\begin{proof}
We follow the proof in \cite{bhattacharjee2022large} Theorem 2.3(D). Let $\delta \in (2/\alpha, 1)$. Then
\begin{align*}
\mathbb{P}\left( W^2_{(1)}(n) > L(n)\right) \leq& \mathbb{P}\left( W^2_{(1)}(n) > n^\delta \right) + \mathbb{P}\left( L(n) < n^\delta \right). 
\end{align*}
Note that by Chebyshev's inequality
\begin{align*}
\mathbb{P}\left( L(n) < n^\delta \right) \leq \frac{n \text{Var}[W_1]}{(n^\delta - n\mathbb{E}[W_1])^2} =& \frac{1}{n}\frac{\text{Var}[W_1]}{(n^{\delta-1} - \mathbb{E}[W_1])^2} \rightarrow 0.
\end{align*}
On the otherhand, we may apply the union bound to achieve that 
\begin{align*}
\mathbb{P}\left( W^2_{(1)}(n) > n^\delta \right)  \leq n\mathbb{P}\left( W^2_{1} > n^\delta \right) \sim n^{1-\frac{\alpha \delta}{2}} \ell(n^{\frac{\delta}{2}}),
\end{align*}
for a slowly varying function $\ell$. Hence $\mathbb{P}\left( W^2_{(1)}(n) > n^\delta \right) \rightarrow 0$ as $n \rightarrow \infty$ and the proof is complete. 
\end{proof}

Our next result again considers the concentration of  $D_i(n)$ around $W_{(i)}(n)$ for $i \in [k(n)]$ using Chernoff's inequality which we present in Lemma \ref{lem:Chernoff}.

\begin{lemma}
\label{lem:CL_conc}
Assume that $U$ satisfies \eqref{eq:rv} and \eqref{eq:Ulower}. Suppose $\alpha > 2$ and $k(n) \log^\alpha(n)/n \rightarrow 0$ as $n \rightarrow \infty$. Then, under the Chung-Lu model, as $n \rightarrow \infty$
\begin{align*}
\mathbb{P}\left( \forall i \in [k(n)],  \left| D_i(n) - W_{(i)}(n) \right| < \sqrt{3\log (n) W_{(i)}(n)}  \right) \rightarrow 1.
\end{align*} 
\end{lemma}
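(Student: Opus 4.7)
The plan is to mirror the structure of Lemma \ref{lem:d_conc} but work around the fact that in the Chung-Lu model, the conditional expectation $\mathbb{E}[D_i(n)\mid\{W_{(i)}(n)\}_{i\in[n]}]=\sum_{j=1}^n(W_{(i)}(n)W_{(j)}(n)/L(n)\wedge 1)$ is not \emph{a priori} equal to $W_{(i)}(n)$ because of the clipping at one. The key observation is that on the event $B(n)=\{W_{(1)}^2(n)\le L(n)\}$, every product $W_{(i)}(n)W_{(j)}(n)/L(n)$ is bounded above by $W_{(1)}^2(n)/L(n)\le 1$, so the clipping is inactive and the conditional mean collapses to $\sum_{j=1}^n W_{(i)}(n)W_{(j)}(n)/L(n)=W_{(i)}(n)$ for every $i\in[n]$. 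Since $\alpha>2$, Lemma \ref{lem:maxbound} gives $\mathbb{P}(B(n))\to 1$, so it suffices to prove the stated concentration on $B(n)$.

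Next, I would intersect with the high-probability event $G(n)=\{W_{(k(n))}(n)>L\log(n)\}$ for a sufficiently large constant $L$ (to be fixed below), whose probability tends to one by Lemma \ref{lem:log}; on $G(n)$ every $W_{(i)}(n)$, $i\in[k(n)]$, is at least $L\log(n)$. Conditional on the weights, and still on $B(n)\cap G(n)$, the degree $D_i(n)$ is a sum of independent Bernoulli variables with conditional mean exactly $W_{(i)}(n)$. Applying Chernoff's inequality (Lemma \ref{lem:Chernoff}) with $t=\sqrt{3\log(n)W_{(i)}(n)}$ yields a bound of the form
\begin{align*}
\mathbb{P}\bigl(|D_i(n)-W_{(i)}(n)|\ge \sqrt{3\log(n)W_{(i)}(n)}\bigm|\{W_{(i)}(n)\}_{i\in[n]}\bigr)\le 2\exp\!\left(-\frac{3\log(n)W_{(i)}(n)}{2W_{(i)}(n)+\tfrac{2}{3}\sqrt{3\log(n)W_{(i)}(n)}}\right),
\end{align*}
and on $G(n)$ this is no larger than $2\exp(-c\log(n))$ for some $c>1$ once $L$ is chosen large enough (a routine computation shows $L=12$ suffices, giving exponent at least $9/7$).

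Finally, a union bound over $i\in[k(n)]$ combined with the fact that $k(n)=o(n)$ (and hence $k(n)\cdot 2n^{-c}\to 0$) completes the argument, after taking expectations over the weights restricted to $B(n)\cap G(n)$ and absorbing the complementary events via Lemmas \ref{lem:maxbound} and \ref{lem:log}. The only non-routine step is the first one: recognizing that the hypothesis $\alpha>2$ is precisely what is needed to eliminate the clipping through Lemma \ref{lem:maxbound} and thereby recover the clean identity $\mathbb{E}[D_i(n)\mid\text{weights}]=W_{(i)}(n)$. Once that is in hand, the remainder of the proof is a Bernoulli analogue of Lemma \ref{lem:d_conc}, with Chernoff's bound replacing the Poisson bound and the slightly smaller constant $3$ (instead of $5$) reflecting the better tail behavior of Bernoulli sums.
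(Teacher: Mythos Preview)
Your proposal is correct and follows essentially the same route as the paper: restrict to the events $R(n)=\{W_{(1)}^2(n)\le L(n)\}$ and $V(n)=\{W_{(k(n))}(n)\ge 3\log n\}$ (your $B(n)$ and $G(n)$), use Lemmas~\ref{lem:maxbound} and~\ref{lem:log} to show they hold with high probability, observe that on $R(n)$ the clipping disappears so $\mathbb{E}[D_i(n)\mid\{W_{(i)}(n)\}]=W_{(i)}(n)$, apply Chernoff, and finish with a union bound. The only cosmetic difference is that the paper invokes the multiplicative form of Lemma~\ref{lem:Chernoff} directly with $\delta=\sqrt{3\log(n)/W_{(i)}(n)}\in(0,1)$ on $V(n)$, which immediately gives the clean bound $2e^{-\delta^2 W_{(i)}(n)/3}=2n^{-1}$ and hence $2k(n)/n\to 0$; this avoids your Bernstein-style denominator computation and lets the constant $3$ (rather than $12$) suffice for $V(n)$.
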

\begin{proof}
By partitioning on the events
\begin{align*}
R(n) = \left\lbrace W^2_{(1)}(n) \leq L(n) \right\rbrace, V(n) = \left\lbrace W_{(k(n))}(n) \geq 3\log(n) \right\rbrace,
\end{align*}
and applying Lemmas \ref{lem:log} and \ref{lem:maxbound} it remains to show that 
\begin{align*}
\mathbb{P}&\left( \exists i \in [k(n)] :  \left| D_i(n) - W_{(i)}(n) \right| \geq \sqrt{3\log (n) W_{(i)}(n)}, R(n), V(n) \right) \rightarrow 0
\end{align*}
Employing the union bound gives that
\begin{align*}
\mathbb{P}&\left( \exists i \in [k(n)] : \left| D_i(n) - W_{(i)}(n) \right| \geq \sqrt{3\log (n) W_{(i)}(n)}, R(n), V(n)  \right) \\
\leq& \sum_{i = 1}^{k(n)} \mathbb{P}\left( \left| D_i(n) - W_{(i)}(n) \right| \geq \sqrt{3\log (n) W_{(i)}(n)}, R(n), V(n)  \right) \\
=& \sum_{i = 1}^{k(n)} \mathbb{E}\left[ \mathbb{P}\left( \left| D_i(n) - W_{(i)}(n) \right| \geq \sqrt{3\log (n) W_{(i)}(n)} \mid \{W_{(i)}(n)\}_{i \in [n]} \right)1_{R(n) \cap V(n)} \right],
\intertext{and under the event $R(n)$, $W_{(i)}(n) = \mathbb{E}\left[D_i(n) | \{W_{(i)}(n)\}_{i \in [n]} \right]$ for all $i \in [k(n)]$. Further, under $V(n)$ $3\log(n) /  W_{(i)}(n) < 1$ so that we may apply Chernoff's bound (see Lemma \ref{lem:Chernoff}) to achieve that}
=& \sum_{i = 1}^{k(n)} \mathbb{E}\left[2 n^{-1 }1_{R(n)\cap V(n)} \right] \\
\leq& 2k(n)/n \rightarrow 0. 
\end{align*}
\end{proof}
In addition, we require a variant of the previous result that holds for all $i \in [n]$. That is, we require high probability upper bounds for all of the degrees generated from the Chung-Lu model.  
\begin{lemma}
\label{lem:d_upper}
Assume that $U$ satisfies \eqref{eq:rv}. Suppose $\alpha > 2$. Under the Chung-Lu model, as $n \rightarrow \infty$
\begin{align*}
\mathbb{P}\left( \forall i \in [n], D_i(n) <  W_{(i)}(n) + \sqrt{4\log(n) W_{(i)}(n)}\vee 4\log(n) \right) \rightarrow 1. 
\end{align*}
\end{lemma}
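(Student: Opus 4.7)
The plan is to apply Chernoff's upper tail inequality (Lemma \ref{lem:Chernoff}) to each $D_i(n)$ conditional on $\{W_{(i)}(n)\}_{i\in[n]}$, and then take a union bound across all $i \in [n]$. The key observation that distinguishes this from Lemma \ref{lem:CL_conc} is that, since we only want a \emph{one-sided} bound, we do not need the event $R(n)$ ensuring $\mathbb{E}[D_i(n) \mid \text{weights}] = W_{(i)}(n)$ exactly. The trivial inequality $x \wedge 1 \leq x$ gives
$$
\mu_i := \mathbb{E}\bigl[D_i(n)\mid \{W_{(i)}(n)\}_{i\in[n]}\bigr] = \sum_{j=1}^n\left(\frac{W_{(i)}(n)W_{(j)}(n)}{L(n)}\wedge 1\right) \leq W_{(i)}(n),
$$
for every $i \in [n]$ \emph{unconditionally}. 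Hence, for any $t > 0$, the upper tail at level $W_{(i)}(n) + t$ dominates the upper tail at level $\mu_i + t$, where Chernoff directly applies since $D_i(n)$ is, conditionally, a sum of independent Bernoullis.

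Next, I would split into two regimes based on $W_{(i)}(n)$ versus $4\log(n)$, exactly as in the proof of Lemma \ref{lem:Poi_conc}. Using a Bernstein-type form of Chernoff, namely $\mathbb{P}(D_i(n) \geq \mu_i + t) \leq \exp(-t^2/(2(\mu_i + t/3)))$ (conditional on the weights), the choice $t = \sqrt{4\log(n)W_{(i)}(n)} \vee 4\log(n)$ is tuned so that both regimes collapse to the same conditional bound. In the regime $W_{(i)}(n) > 4\log(n)$, one has $t = \sqrt{4\log(n)W_{(i)}(n)} < W_{(i)}(n)$, and the denominator $2(\mu_i + t/3)$ is controlled by a multiple of $W_{(i)}(n)$. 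In the regime $W_{(i)}(n) \leq 4\log(n)$, $t = 4\log(n)$ dominates both $\mu_i$ and itself, and the denominator is $O(\log n)$. In each case, the exponent simplifies to $-\tfrac{3}{2}\log n$, giving a conditional tail bound of $n^{-3/2}$.

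Taking expectation over the weights and then a union bound over $i \in [n]$ yields
$$
\mathbb{P}\!\left(\exists i \in [n]:\, D_i(n) \geq W_{(i)}(n) + \sqrt{4\log(n)W_{(i)}(n)} \vee 4\log(n)\right) \leq n \cdot n^{-3/2} = n^{-1/2} \to 0,
$$
which is the desired conclusion. The main obstacle is the choice of constants. In Lemma \ref{lem:CL_conc} the union bound was over only $k(n) = o(n)$ indices, so the constant $3$ inside $\sqrt{3\log(n) W_{(i)}(n)}$ sufficed; here the sum ranges over all $n$ indices, forcing a stricter per-index tail that necessitates both the larger constant $4$ and the explicit maximum with $4\log(n)$ to cover the small-weight regime where the Bernstein correction term $t/3$ dominates. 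Verifying that these constants are precisely calibrated so that the exponent meets the $-\tfrac{3}{2}\log n$ threshold in both regimes is the only genuinely non-routine step.
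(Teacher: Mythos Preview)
Your argument is correct but takes a genuinely different route from the paper. The paper's proof partitions on the event $R(n) = \{W_{(1)}^2(n) \leq L(n)\}$, invokes Lemma~\ref{lem:maxbound} (which requires $\alpha > 2$) to make $\mathbb{P}(R^c(n)) \to 0$, and then on $R(n)$ uses the exact equality $\mathbb{E}[D_i(n)\mid \{W_{(j)}(n)\}] = W_{(i)}(n)$ together with the Chernoff form from Lemma~\ref{lem:Chernoff}. Your observation that the \emph{trivial} inequality $\mu_i \leq W_{(i)}(n)$ already suffices for a one-sided upper tail bound bypasses $R(n)$ and Lemma~\ref{lem:maxbound} entirely; as a consequence, your proof never uses the hypothesis $\alpha > 2$, so you in fact establish the lemma for all $\alpha > 0$. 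The paper's detour through $R(n)$ buys nothing here---it is a carryover from the two-sided argument in Lemma~\ref{lem:CL_conc}, where the lower tail genuinely needs $\mu_i \geq W_{(i)}(n)$. Two minor remarks: your sentence ``the upper tail at level $W_{(i)}(n)+t$ dominates the upper tail at level $\mu_i + t$'' has the inequality stated backwards (you mean \emph{is dominated by}), though the subsequent use is correct; and the Bernstein form $\exp(-t^2/(2(\mu_i + t/3)))$ you invoke is not literally Lemma~\ref{lem:Chernoff}, but the paper's multiplicative form $e^{-\delta^2\mu/(2+\delta)}$ rewrites as $e^{-t^2/(2\mu_i + t)}$ and your two-regime split goes through verbatim with that denominator, yielding $n^{-4/3}$ per index rather than $n^{-3/2}$, which still suffices.
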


\begin{proof}
Recall the event
\begin{align*}
R(n) = \left\lbrace W^2_{(1)}(n) \leq L(n) \right\rbrace.
\end{align*}
We partition,
\begin{align*}
\mathbb{P}&\left( \exists i \in [n] : D_i(n) \geq  W_{(i)}(n) + \sqrt{4\log(n) W_{(i)}(n)}\vee 4\log(n) \right) \\
\leq& \mathbb{P}\left( \exists i \in [n] : D_i(n) \geq  W_{(i)}(n) + \sqrt{4\log(n) W_{(i)}(n)}\vee 4\log(n), R(n) \right) + \mathbb{P}\left(R^c(n)\right).
\end{align*}
By Lemma \ref{lem:maxbound}, the second hand term converges to zero as $n \rightarrow \infty$. Hence it suffices to show that the first term is asymptotically negligible. See that
\begin{align*}
\mathbb{P}&\left( \exists i \in [n] : D_i(n) \geq  W_{(i)}(n) + \sqrt{4\log(n) W_{(i)}(n)}\vee 4\log(n), R(n) \right) \\
\leq& \sum_{i = 1}^n \mathbb{P}\left( D_i(n) \geq  W_{(i)}(n) + \sqrt{4\log(n) W_{(i)}(n)}\vee 4\log(n), R(n) \right) \\
=& \sum_{i = 1}^n \mathbb{E}\left[\mathbb{P}\left( D_i(n) \geq  W_{(i)}(n) + \sqrt{4\log(n) W_{(i)}(n)}\vee 4\log(n) \mid \{W_{(i)}(n)\}_{i \in [n]} \right) 1_{R(n)} \right],
\intertext{and on $R(n)$, $\mathbb{E}\left[D_i(n) \mid  \{W_{(i)}(n)\}_{i \in [n]} \right] = W_{(i)}(n)$ for all $i \in [n]$ so that we may apply Lemma \ref{lem:Chernoff} to achieve that}
\leq& \sum_{i = 1}^n \mathbb{E}\left[\exp\left\lbrace -\frac{\left( \sqrt{\frac{4\log(n)}{W_{(i)}(n)}}\vee \frac{4\log(n)}{W_{(i)}(n)} \right)^2}{2 + \sqrt{\frac{4\log(n)}{W_{(i)}(n)}}\vee \frac{4\log(n)}{W_{(i)}(n)}} W_{(i)}(n)  \right\rbrace  1_{R(n)} \right]. 
\end{align*}
Note that if  $W_{(i)}(n) > 4\log(n)$
\begin{align*}
\frac{\left( \sqrt{\frac{4\log(n)}{W_{(i)}(n)}}\vee \frac{4\log(n)}{W_{(i)}(n)} \right)^2}{2 + \sqrt{\frac{4\log(n)}{W_{(i)}(n)}}\vee \frac{4\log(n)}{W_{(i)}(n)}} W_{(i)}(n)   =&  \frac{ \frac{4\log(n)}{W_{(i)}(n)}}{2 + \sqrt{\frac{4\log(n)}{W_{(i)}(n)}}} W_{(i)}(n) \\
>& \frac{4\log(n)}{3W_{(i)}(n)} W_{(i)}(n) \\
=& n^{-4/3},
\end{align*}
and if $W_{(i)}(n) \leq 4\log(n)$
\begin{align*}
\frac{\left( \sqrt{\frac{4\log(n)}{W_{(i)}(n)}}\vee \frac{4\log(n)}{W_{(i)}(n)} \right)^2}{2 + \sqrt{\frac{4\log(n)}{W_{(i)}(n)}}\vee \frac{4\log(n)}{W_{(i)}(n)}} W_{(i)}(n)  =&  \frac{\left( \frac{4\log(n)}{W_{(i)}(n)} \right)^2}{2 + \frac{4\log(n)}{W_{(i)}(n)}} W_{(i)}(n)  \\
=& \frac{4^2\log^2(n) }{2W_{(i)}(n) + 4\log(n)}    \\
\geq&  \frac{4^2\log^2(n) }{12\log(n)} = n^{-4/3}.
\end{align*}
Hence
\begin{align*}
\mathbb{P}\left( \exists i \in [n] : D_i(n) >  W_{(i)}(n) + \sqrt{4\log(n) W_{(i)}(n)}\vee 4\log(n), R(n) \right) \leq& \sum_{i = 1}^n n^{-4/3} \rightarrow 0,
\end{align*}
as $n \rightarrow \infty$. 
\end{proof}

Here we present the Chernoff bounds \cite{hagerup1990guided, vershynin2018high}. 
\begin{lemma}
\label{lem:Chernoff}
For any $i \in [n]$, $\delta \in (0, 1)$ 
\begin{align*}
\mathbb{P}\left( \left| D_i(n) - \mathbb{E}\left[D_i(n) | \{W_{(i)}(n)\}_{i \in [n]} \right] \right| \geq \delta \mathbb{E}\left[D_i(n) | \{W_{(i)}(n)\}_{i \in [n]} \right] \mid   \{W_{(i)}(n)\}_{i \in [n]} \right) \leq 2e^{-\delta^2 \mathbb{E}\left[D_i(n) | \{W_{(i)}(n)\}_{i \in [n]} \right]/ 3}, 
\end{align*}
meanwhile for any $\delta > 0$,
\begin{align*}
\mathbb{P}\left( D_i(n)  \geq \left( 1 + \delta\right) \mathbb{E}\left[D_i(n) | \{W_{(i)}(n)\}_{i \in [n]} \right] \mid   \{W_{(i)}(n)\}_{i \in [n]} \right) \leq e^{-\delta^2 \mathbb{E}\left[D_i(n) | \{W_{(i)}(n)\}_{i \in [n]} \right]/ (2 + \delta)}.
\end{align*}
\end{lemma}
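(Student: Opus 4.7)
The plan is to derive Lemma \ref{lem:Chernoff} as the conditional version of the classical multiplicative Chernoff bound for sums of independent non-negative integer random variables. Conditioning on the weights $\{W_{(i)}(n)\}_{i \in [n]}$, the construction in \eqref{eq:NR} and \eqref{eq:CL} guarantees that $D_i(n) = \sum_{j=1}^n A_{ij}(n)$ is a sum of independent random variables: Poisson in the Norros-Reittu model and Bernoulli in the Chung-Lu model. In either case the conditional moment generating function of each summand satisfies
\[
\mathbb{E}\!\left[e^{t A_{ij}(n)} \mid \{W_{(i)}(n)\}_{i \in [n]}\right] \leq \exp\!\left(\mathbb{E}[A_{ij}(n) \mid \{W_{(i)}(n)\}_{i \in [n]}](e^t - 1)\right)
\]
for every $t \in \mathbb{R}$: with equality in the Poisson case, and via the elementary inequality $1 + x \leq e^x$ applied to $x = p(e^t - 1)$ in the Bernoulli case. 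Writing $\mu_i(n) := \mathbb{E}[D_i(n) \mid \{W_{(i)}(n)\}_{i \in [n]}]$ and invoking conditional independence of $\{A_{ij}(n)\}_{j=1}^n$, the conditional moment generating function of $D_i(n)$ is therefore dominated by $\exp(\mu_i(n)(e^t - 1))$.

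From here the argument is classical. For the one-sided statement, I would apply the conditional Markov inequality to $e^{t D_i(n)}$ with $t = \log(1 + \delta) > 0$ to obtain
\[
\mathbb{P}\!\left(D_i(n) \geq (1+\delta)\mu_i(n) \mid \{W_{(i)}(n)\}_{i \in [n]}\right) \leq \left(\frac{e^\delta}{(1+\delta)^{1+\delta}}\right)^{\mu_i(n)},
\]
and then invoke the elementary inequality $(1 + \delta)\log(1+\delta) - \delta \geq \delta^2/(2+\delta)$, valid for all $\delta > 0$, to convert the right-hand side into $\exp(-\delta^2 \mu_i(n)/(2+\delta))$, which is exactly the second bound. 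For the two-sided bound, I would combine this with the analogous lower-tail estimate obtained by optimizing at $t < 0$, which yields $\mathbb{P}(D_i(n) \leq (1-\delta)\mu_i(n) \mid \cdot) \leq \exp(-\delta^2 \mu_i(n)/2)$ for $\delta \in (0,1)$. Restricting to $\delta \in (0,1)$ so that $2 + \delta < 3$, both one-sided probabilities are bounded by $\exp(-\delta^2 \mu_i(n)/3)$, and a union bound over the two tails produces the factor $2$.

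The only mildly nontrivial ingredient is the scalar inequality $(1+\delta)\log(1+\delta) - \delta \geq \delta^2/(2+\delta)$ for $\delta > 0$, a standard one-variable exercise verified by comparing Taylor expansions at $\delta = 0$ or by elementary differentiation. Everything else is mechanical, and the proof is insensitive to any further structure of the weight distribution or to the specific values of the conditional means $\mathbb{E}[A_{ij}(n) \mid \{W_{(i)}(n)\}_{i \in [n]}]$; accordingly, one may alternatively cite \cite{hagerup1990guided, vershynin2018high}, whose arguments apply verbatim once one conditions on the weights. I do not foresee any genuine obstacle, only the bookkeeping of combining the upper and lower tails into the symmetric form.
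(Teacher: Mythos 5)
Your proof is correct and is essentially the argument the paper relies on: the paper states Lemma~\ref{lem:Chernoff} without proof, citing \cite{hagerup1990guided, vershynin2018high}, and your derivation is exactly the standard multiplicative Chernoff argument from those references (conditional MGF domination $\mathbb{E}[e^{tA_{ij}}\mid\cdot]\le\exp(\mathbb{E}[A_{ij}\mid\cdot](e^t-1))$, valid for both the Poisson and Bernoulli summands, followed by the elementary inequalities $(1+\delta)\log(1+\delta)-\delta\ge\delta^2/(2+\delta)$ and $(1-\delta)\log(1-\delta)+\delta\ge\delta^2/2$ and a union bound over the two tails). No gaps; the only point worth noting is that the argument correctly uses the conditional mean $\mathbb{E}[D_i(n)\mid\{W_{(i)}(n)\}_{i\in[n]}]$ rather than $W_{(i)}(n)$ itself, which is what the lemma requires since the two differ in the Chung--Lu model.
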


\subsection{Upper order statistic alignment}\label{sec:ord_proof}

We may now combine Lemmas \ref{lem:w_repel}, \ref{lem:d_conc} and \ref{lem:CL_conc} to state a general result about the upper $k(n)$ node degrees drawn from either the Norros-Reittu model or the Chung-Lu model with $\alpha > 2$.

\begin{theorem}
\label{thm:C&S}
Assume that $U$ satisfies \eqref{eq:rv} and \eqref{eq:Ulower}. Assume that the network is generated under either the Norros-Reittu random graph with $\alpha > 0$ or the Chung-Lu random graph with $\alpha > 2$. Suppose $k(n)$ is such that $k(n) \rightarrow \infty$ and $k(n) \log^{\alpha}(n) /n^{\frac{1}{4\alpha + 1}} \rightarrow 0$. Then
\begin{align*}
\mathbb{P}\left(D_1(n) > D_2(n) > \dots > D_{k(n)}(n) \right) \rightarrow 1,
\end{align*}
as $n \rightarrow \infty$. 
\end{theorem}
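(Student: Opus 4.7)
The plan is to combine the spacing and concentration lemmas established in the preceding sections, namely Lemmas \ref{lem:w_repel}, \ref{lem:d_conc}, and \ref{lem:CL_conc}, via a simple triangle-inequality argument. Since the growth condition $k(n) \log^{\alpha}(n)/n^{1/(4\alpha+1)} \to 0$ in particular implies $k(n) \log^{\alpha}(n)/n \to 0$, all three lemmas are directly applicable under the hypotheses of the theorem.

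The first step is to define the events $S(n)$ and $C(n)$ exactly as introduced in Section \ref{sec:orderstat}. Lemma \ref{lem:w_repel} gives $\mathbb{P}(S(n)) \to 1$, while Lemma \ref{lem:d_conc} (in the Norros-Reittu case) or Lemma \ref{lem:CL_conc} (in the Chung-Lu case with $\alpha>2$) gives $\mathbb{P}(C(n)) \to 1$. Hence it suffices to show that on the intersection $S(n) \cap C(n)$, the strict inequalities $D_1(n) > D_2(n) > \cdots > D_{k(n)}(n)$ automatically hold. Observe that for Chung-Lu we actually have the stronger constant $\sqrt{3\log n \, W_{(i)}(n)}$, which only helps, so I can treat both models together by using the looser Norros-Reittu constant.

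The core estimate is the following: for any $i \in [k(n)-1]$, on the event $S(n) \cap C(n)$, one has
\begin{align*}
D_i(n) - D_{i+1}(n) &\geq \bigl(W_{(i)}(n) - \sqrt{5\log(n) W_{(i)}(n)}\bigr) - \bigl(W_{(i+1)}(n) + \sqrt{5\log(n) W_{(i+1)}(n)}\bigr) \\
&= \bigl(W_{(i)}(n) - W_{(i+1)}(n)\bigr) - \sqrt{5\log(n) W_{(i)}(n)} - \sqrt{5\log(n) W_{(i+1)}(n)} \\
&\geq \bigl(W_{(i)}(n) - W_{(i+1)}(n)\bigr) - 2\sqrt{5\log(n) W_{(i)}(n)} \\
&> 0,
\end{align*}
where the second inequality uses $W_{(i+1)}(n) \leq W_{(i)}(n)$ and the final strict inequality is precisely the spacing bound defining $S(n)$. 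The argument for $i = k(n)-1$ uses the same input. Since this string of inequalities holds for every $i \in [k(n)-1]$ simultaneously on $S(n) \cap C(n)$, the event $\{D_1(n) > D_2(n) > \cdots > D_{k(n)}(n)\}$ contains $S(n) \cap C(n)$, and the conclusion follows from $\mathbb{P}(S(n) \cap C(n)) \to 1$.

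There is no real obstacle here; the heavy lifting was done in the preceding subsections. The only point requiring a little care is bookkeeping the constants so that the concentration envelope is strictly dominated by the spacing envelope (the factor of $2$ in the spacing bound versus the single square root in each concentration bound is precisely tuned for this). I will be explicit about using $W_{(i+1)}(n) \leq W_{(i)}(n)$ to collapse the two concentration terms into $2\sqrt{5\log(n) W_{(i)}(n)}$, which is what makes the spacing bound bite.
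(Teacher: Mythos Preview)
Your proposal is correct and follows essentially the same approach as the paper's proof: both show that $S(n)\cap C(n)$ is contained in the event $\{D_1(n)>\cdots>D_{k(n)}(n)\}$ via the identical triangle-inequality computation, then invoke Lemmas \ref{lem:w_repel}, \ref{lem:d_conc}, and \ref{lem:CL_conc}. Your remark that the Chung-Lu constant $\sqrt{3\log(n)W_{(i)}(n)}$ is dominated by $\sqrt{5\log(n)W_{(i)}(n)}$ is a nice explicit observation the paper leaves implicit.
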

\begin{proof}
Simply note that
\begin{align*}
\left\lbrace D_1(n) > D_2(n) > \dots > D_{k(n)}(n) \right\rbrace \supset &\left\lbrace \forall i \in [k(n)],  \left| D_i(n) - W_{(i)}(n) \right| < \sqrt{5\log (n) W_{(i)}(n)}  \right\rbrace \cap \\
&\left\lbrace \forall i \in [k(n)],  W_{(i)}(n) - W_{(i + 1)}(n) >  2\sqrt{5\log (n) W_{(i)}(n)} \right\rbrace \\
\equiv& C(n) \cap S(n),
\end{align*}
since on $C(n)$ and $S(n)$ we have that for all $i \in [k(n) - 1]$
\begin{align*}
D_i(n) - D_{i + 1}(n) >& W_{(i)}(n) -  \sqrt{5\log (n) W_{(i)}(n)}  - W_{(i + 1)}(n) -  \sqrt{5\log (n) W_{(i + 1)}(n)} \\
>& W_{(i)}(n)  - W_{(i + 1)}(n) -  2\sqrt{5\log (n) W_{(i)}(n)}  \\
>& 0.
\end{align*}
Hence
\begin{align*}
\mathbb{P}\left(\left\lbrace D_1(n) > D_2(n) > \dots > D_{k(n)}(n) \right\rbrace^c \right) \leq& \mathbb{P}\left(C^c(n) \cup S^c(n)\right) \\
\leq& \mathbb{P}\left(C^c(n)\right)  + \mathbb{P}\left(S^c(n)\right) \\
\rightarrow& 0,
\end{align*}
as $n \rightarrow \infty$, where we have applied Lemma \ref{lem:w_repel} as well as Lemma \ref{lem:d_conc} in the Chung-Lu case and Lemma \ref{lem:CL_conc} in the Norros-Reittu case. 
\end{proof}

The last step towards proving alignment of upper order statistics is to ensure that no degree $D_i(n)$, $i \in [n] \setminus [k(n)]$, exceeds $D_{k(n)}$. This is proved in the following lemma

\begin{lemma}
\label{lem:M}
Assume that $U$ satisfies \eqref{eq:rv} and \eqref{eq:Ulower}. Assume that the network is generated under either the Norros-Reittu model with $\alpha > 0$ or the Chung-Lu model with $\alpha > 2$. Suppose $k(n)$ is such that $k(n) \rightarrow \infty$ and $k(n) \log^{\alpha}(n) /n^{\frac{1}{4\alpha + 1}} \rightarrow 0$ as $n \rightarrow \infty$. Then as $n \rightarrow \infty$
\begin{align*}
\mathbb{P}\left(D_{k(n)}(n) >  \max_{i \in [n] \setminus [k(n)]}D_i(n) \right) \rightarrow 1.
\end{align*}
\end{lemma}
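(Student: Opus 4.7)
The plan is to combine the weight-spacing event $S(n)$ from Lemma \ref{lem:w_repel} and the top-$k(n)$ concentration event $C(n)$ from Lemmas \ref{lem:d_conc}--\ref{lem:CL_conc} with a new event that uniformly upper-bounds \emph{every} degree in the network rather than only the top $k(n)$. Specifically, I would introduce
\[
\tilde{C}(n) = \left\{\forall i \in [n],\ D_i(n) < W_{(i)}(n) + \sqrt{5\log(n)\, W_{(i)}(n)} \vee 5\log(n)\right\},
\]
and argue that $\mathbb{P}(\tilde{C}(n)) \to 1$. For the Norros-Reittu model this follows by applying Lemma \ref{lem:Poi_conc} to each $i \in [n]$ and taking a union bound, which yields a failure probability of order $n \cdot n^{-5/4} = n^{-1/4} \to 0$; for the Chung-Lu model the analogous statement (with constant $4$ in place of $5$) is precisely Lemma \ref{lem:d_upper}.

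On the intersection $C(n) \cap \tilde{C}(n) \cap S(n)$, for any $i \in [n]\setminus[k(n)]$ I would compare the lower bound $D_{k(n)}(n) > W_{(k(n))}(n) - \sqrt{5\log(n)\, W_{(k(n))}(n)}$ coming from $C(n)$ with the upper bound
\[
D_i(n) < W_{(i)}(n) + \sqrt{5\log(n)\, W_{(i)}(n)} \vee 5\log(n) \leq W_{(k(n)+1)}(n) + \sqrt{5\log(n)\, W_{(k(n)+1)}(n)} \vee 5\log(n),
\]
where the second inequality uses $W_{(i)}(n) \leq W_{(k(n)+1)}(n)$ together with monotonicity of $x \mapsto x + \sqrt{5\log(n)\, x}$. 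Subtracting and invoking the spacing bound $W_{(k(n))}(n) - W_{(k(n)+1)}(n) > 2\sqrt{5\log(n)\, W_{(k(n))}(n)}$ reduces the claim $D_{k(n)}(n) > D_i(n)$ to showing that the residual $\sqrt{5\log(n)\, W_{(k(n))}(n)} - \sqrt{5\log(n)\, W_{(k(n)+1)}(n)} \vee 5\log(n)$ is positive.

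The main technical point is this residual term, in particular handling the regime where $W_{(k(n)+1)}(n) \leq 5\log(n)$ and the $\vee 5\log(n)$ clause is active. This is where I would invoke Lemma \ref{lem:log} to ensure $W_{(k(n))}(n) \geq 5\log(n)$ with probability tending to $1$, which forces $\sqrt{5\log(n)\, W_{(k(n))}(n)} \geq 5\log(n)$; combined with $\sqrt{5\log(n)\, W_{(k(n)+1)}(n)} \leq \sqrt{5\log(n)\, W_{(k(n))}(n)}$ (strictly, by continuity of $F$), the residual is strictly positive. The Chung-Lu case is identical up to replacing the constant $5$ by the constant $4$ inherited from Lemma \ref{lem:d_upper}, which remains comfortably below the spacing multiplier $2\sqrt{5}$. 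Taking complements of the four asymptotically almost-sure events $S(n)$, $C(n)$, $\tilde{C}(n)$ and $\{W_{(k(n))}(n) \geq 5\log(n)\}$ and applying a union bound then delivers $\mathbb{P}(M(n)) \to 1$.
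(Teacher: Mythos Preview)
Your proposal is correct and follows essentially the same approach as the paper's proof. The paper packages the argument slightly differently---defining events $B_1(n)$ and $B_2(n)$ that correspond to your $\tilde C(n)$ restricted to $[n]\setminus[k(n)]$ and the relevant piece of $C(n)$, then reducing to a weight-only probability bound and splitting on whether $W_{(k(n)+1)}(n)\ge 5\log n$---but the ingredients (Lemmas~\ref{lem:Poi_conc}/\ref{lem:d_upper} for the uniform upper bound, Lemmas~\ref{lem:d_conc}/\ref{lem:CL_conc} for the lower bound on $D_{k(n)}$, Lemma~\ref{lem:w_repel} for the spacing, and Lemma~\ref{lem:log} to dispose of the $\vee\,5\log n$ clause) are identical.
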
 
\begin{proof}
We aim to show that
\begin{align*}
\mathbb{P}\left(D_{k(n)}(n) \leq  \max_{i \in [n] \setminus [k(n)]}D_i(n) \right) \rightarrow 0,
\end{align*}
as $n \rightarrow \infty$. 
Define the events
\begin{align*}
B_1(n) =& \left\lbrace \max_{i \in [n]\setminus[k(n)]} D_i(n) <  W_{k(n) + 1}(n) + \sqrt{5\log(n) W_{k(n) + 1}(n)}\vee 5\log(n) \right\rbrace \\
B_2(n) =& \left\lbrace  D_{k(n)}(n) >  W_{(k(n))}(n) -\sqrt{5\log(n) W_{(k(n))}(n)} \right\rbrace 
\end{align*}
By Lemmas  \ref{lem:Poi_conc} and \ref{lem:d_upper}, we have that
\begin{align*}
\mathbb{P}\left(B_1^c(n) \right) \leq&  \mathbb{P}\left(\exists i \in [n]\setminus[k(n)] : D_i(n) \geq  W_{(k(n) + 1)}(n) + \sqrt{5\log(n) W_{(k(n) + 1)}(n)}\vee 5\log(n)  \right) \\
\leq& \mathbb{P}\left(\exists i \in [n]\setminus[k(n)] : D_i(n) \geq  W_{(i)}(n) + \sqrt{5\log(n) W_{(i)}(n)}\vee 5\log(n)  \right) \\
\rightarrow& 0,
\end{align*}
as $n \rightarrow \infty$. In addition, Lemmas \ref{lem:d_conc} and \ref{lem:CL_conc} give that $\mathbb{P}(B^c_2(n)) \rightarrow 0$ as $n \rightarrow \infty$. Hence, it suffices to show that
\begin{align*}
\mathbb{P}\left(D_{k(n)}(n) \leq  \max_{i \in [n] \setminus [k(n)]}D_i(n), B_1(n), B_2(n) \right) \rightarrow 0,
\end{align*}
as $n \rightarrow \infty$, from which it suffices to show that
\begin{align*}
\mathbb{P}\left(W_{(k(n))}(n) -\sqrt{5\log(n) W_{(k(n))}(n)} \leq   W_{(k(n) + 1)}(n) + \sqrt{5\log(n) W_{(k(n) + 1)}(n)}\vee 5\log(n) \right) \rightarrow 0,
\end{align*}
as $n \rightarrow \infty$. Note that
\begin{align*}
\mathbb{P}&\left(W_{(k(n))}(n) -\sqrt{5\log(n) W_{(k(n))}(n)} \leq   W_{(k(n) + 1)}(n) + \sqrt{5\log(n) W_{(k(n) + 1)}(n)}\vee 5\log(n) \right) \\
\leq&  \mathbb{P}\left(W_{(k(n))}(n) - W_{(k(n) + 1)}(n) \leq   2\sqrt{5\log(n) W_{(k(n))}(n)}\right) + \mathbb{P}\left(W_{(k(n) + 1)}(n) < 5\log(n) \right).
\end{align*}
Both terms in the previous display converge to zero as $n \rightarrow \infty$ by Lemmas \ref{lem:w_repel} and \ref{lem:log} respectively. Hence the proof is complete.
\end{proof}

We are now able to prove the main result, Theorem \ref{thm:order}.

\begin{proof}[Proof of Theorem \ref{thm:order}]
Simply notice that the event
\begin{align*}
\left\lbrace \left(R_1(n), R_2(n), \dots, R_{k(n)}(n)\right) = \left(1, 2, \dots,k(n)\right)  \right\rbrace
\end{align*}
contains the event
\begin{align*}
\left\lbrace D_1(n) > D_2(n) > \dots > D_{k(n)}(n) \right\rbrace \cap \left\lbrace  D_{k(n)}(n) >  \max_{i \in [n] \setminus [k(n)]}D_i(n)  \right\rbrace,
\end{align*}
and apply Theorem \ref{thm:C&S} and Lemma \ref{lem:M}.
\end{proof}

\subsection{Approximating tail index estimators}\label{sec:tail_approx}

In this section we provide proofs showing that the tail index estimators based on the degrees may be approximated by their weight-based counterparts under the alignment of upper order statistics. We first prove the approximation for the Hill and Pickands estimators. 

\begin{lemma}
\label{lem:hill_approx}
Assume that $U$ satisfies \eqref{eq:rv} and \eqref{eq:Ulower}. Suppose the network is generated under either the Norros-Reittu model with $\alpha > 0$ or the Chung-Lu model with $\alpha > 2$. Let $k(n)$ be such that $k(n) \rightarrow \infty$, $k(n) \log^{\alpha}(n) /n^{\frac{1}{4\alpha + 1}} \rightarrow 0$ as $n \rightarrow \infty$. Then for $\ell \in \{\textup{Hill}, \textup{Pick} \}$
\begin{align*}
\sqrt{k(n)}\left|\hat{\gamma}^\ell_D(n) - \hat{\gamma}^\ell_W(n) \right| \xrightarrow{p} 0,
\end{align*}
as $n \rightarrow \infty$. 
\end{lemma}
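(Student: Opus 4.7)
The plan is to work on the high-probability intersection of the order-statistic alignment event from Theorem \ref{thm:order} (applied at level $4k(n)$ for the Pickands case, which satisfies the same asymptotic rate) and the degree concentration events $C(n)$ from Lemmas \ref{lem:d_conc} and \ref{lem:CL_conc}. On this intersection, $D_{(i)}(n) = D_i(n)$ for every $i$ in the relevant range and one may write $D_i(n) = W_{(i)}(n)(1+\delta_i(n))$ with
$$|\delta_i(n)| \leq \sqrt{5\log(n)/W_{(i)}(n)} \leq \sqrt{5\log(n)/W_{(4k(n))}(n)}.$$
Combining \eqref{eq:Ulower} with the consistency $W_{(4k(n))}(n)/U(n/(4k(n))) \xrightarrow{p} 1$ invoked in Lemma \ref{lem:log}, I obtain a uniform high-probability bound $\max_{i \leq 4k(n)} |\delta_i(n)| \leq \varepsilon_n$ with $\varepsilon_n = O(\sqrt{\log(n)(k(n)/n)^{1/\alpha}})$. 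A direct computation shows that the hypothesis $k(n)\log^\alpha(n)/n^{1/(4\alpha+1)} \to 0$ forces $\sqrt{k(n)}\,\varepsilon_n \to 0$, which is the target rate for the whole lemma.

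For the Hill estimator the difference reduces, on the good event, to
$$\hat{\gamma}^\text{Hill}_D(n) - \hat{\gamma}^\text{Hill}_W(n) = \frac{1}{k(n)-1}\sum_{i=1}^{k(n)-1}\log\!\left(\frac{1+\delta_i(n)}{1+\delta_{k(n)}(n)}\right),$$
and the elementary estimate $|\log(1+x)| \leq 2|x|$ for $|x| \leq 1/2$ bounds each summand by $4\varepsilon_n$. Thus $\sqrt{k(n)}\,|\hat{\gamma}^\text{Hill}_D(n) - \hat{\gamma}^\text{Hill}_W(n)| = O(\sqrt{k(n)}\,\varepsilon_n) = o(1)$ with probability tending to one.

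For the Pickands estimator I would reduce to the same kind of estimate by showing that each of the ratios
$$\frac{D_{(k(n))}(n) - D_{(2k(n))}(n)}{W_{(k(n))}(n) - W_{(2k(n))}(n)},\qquad \frac{D_{(2k(n))}(n) - D_{(4k(n))}(n)}{W_{(2k(n))}(n) - W_{(4k(n))}(n)}$$
equals $1 + O_p(\varepsilon_n)$. Indeed, the numerator of the first ratio differs from its denominator by $\delta_{k(n)}(n)W_{(k(n))}(n) - \delta_{2k(n)}(n)W_{(2k(n))}(n)$, whose magnitude is at most $C\sqrt{\log(n)\,W_{(k(n))}(n)}$. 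By regular variation of $U$, $W_{(2k(n))}(n)/W_{(k(n))}(n) \xrightarrow{p} 2^{-\gamma}$, so the denominator is of order $(1-2^{-\gamma})W_{(k(n))}(n)$ in probability; the relative error is therefore $O_p(\varepsilon_n)$. Expanding the logarithm in the definition of the Pickands estimator then yields $\hat{\gamma}^\text{Pick}_D(n) - \hat{\gamma}^\text{Pick}_W(n) = O_p(\varepsilon_n)$, and multiplying by $\sqrt{k(n)}$ gives $o_p(1)$.

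The main obstacle is the Pickands case, where one must rule out an anomalous shrinking of the weight spacings $W_{(jk(n))}(n) - W_{(2jk(n))}(n)$ for $j \in \{1,2\}$; any such shrinking would inflate the relative additive error introduced by the concentration step. This is handled cleanly by the joint convergence of upper order statistics under regular variation, which guarantees $W_{(2jk(n))}(n)/W_{(jk(n))}(n) \xrightarrow{p} 2^{-\gamma} < 1$, so each spacing remains of the same order as the leading term. Once this is secured, both estimators are controlled by the single Taylor-expansion bound $O(\sqrt{k(n)}\,\varepsilon_n)$.
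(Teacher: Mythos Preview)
Your proof is correct and for the Hill estimator it is essentially the paper's argument in different notation: both restrict to the high-probability event where the upper order statistics align and $|D_i(n)-W_{(i)}(n)| < \sqrt{5\log(n)W_{(i)}(n)}$, then bound each term $|\log(D_i(n)/W_{(i)}(n))|$ by a quantity of order $\sqrt{\log(n)/W_{(k(n))}(n)}$ and verify that this is $o(1/\sqrt{k(n)})$ under the growth condition on $k(n)$.

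For Pickands you take a slightly different route in one key step. To lower-bound the spacing $W_{(k(n))}(n)-W_{(2k(n))}(n)$ in the denominator, you invoke the intermediate-order-statistic ratio limit $W_{(2k(n))}(n)/W_{(k(n))}(n)\xrightarrow{p}2^{-\gamma}$, giving the spacing as $(1-2^{-\gamma}+o_p(1))W_{(k(n))}(n)$; the resulting relative error on the ratio is $O_p(\sqrt{\log(n)/W_{(k(n))}(n)})=O_p(\varepsilon_n)$. The paper instead reuses the spacing event $S(n)$ explicitly: summing the lower bounds $W_{(i)}-W_{(i+1)}>2\sqrt{5\log(n)W_{(i)}}$ over $i=k(n),\dots,2k(n)-1$ yields $W_{(k(n))}-W_{(2k(n))}\geq 2k(n)\sqrt{5\log(n)W_{(2k(n))}}$, which produces an extra factor $k(n)$ in the denominator and gives a bound of order $1/k(n)$ on each log-ratio. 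Your version is more self-contained, needing only the standard regular-variation limit rather than a second appeal to Lemma~\ref{lem:w_repel}, while the paper's version ties the Pickands approximation back to the same spacing machinery that drives Theorem~\ref{thm:order}.
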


\begin{proof}
For both tail index estimators, we aim to prove that for any $\epsilon > 0$, as $n \rightarrow \infty$
\begin{align*}
\mathbb{P}\left(\sqrt{k(n)}\left|\hat{\gamma}_D(n) - \hat{\gamma}_W(n) \right| > \epsilon \right) \rightarrow 0.
\end{align*}
It suffices to prove that as $n \rightarrow \infty$
\begin{align*}
\mathbb{P}\left(\sqrt{k(n)}\left|\hat{\gamma}_D(n) - \hat{\gamma}_W(n) \right| > \epsilon, S(n), C(n), M(n)  \right) \rightarrow 0.
\end{align*}
since $P(S^c(n))$, $P(C^c(n))$ and $P(M^c(n))$ all tend towards zero as $n \rightarrow \infty$ by Lemmas \ref{lem:w_repel}, \ref{lem:d_conc}, \ref{lem:CL_conc} and \ref{lem:maxbound}. We begin with the Hill estimator. Recall that on $S(n) \cap C(n) \cap M(n)$, $\left(D_{(1)}(n), \dots, D_{(k(n))}(n)\right) = \left(D_{1}(n), \dots, D_{k(n)}(n)\right)$ so that
\begin{align*}
\sqrt{k(n)}&\left|\hat{\gamma}^\text{Hill}_D(n) - \hat{\gamma}^\text{Hill}_W(n) \right| \\
=& \left| \frac{\sqrt{k(n)}}{k(n) - 1}\sum_{i = 1}^{k(n) - 1}\log\left(D_{i}(n)/W_{(i)}(n) \right) - \sqrt{k(n)}\log\left(D_{k(n)}(n)/W_{(k(n))}(n) \right)  \right| \\
\leq& \frac{1}{\sqrt{k(n) - 1}}\sum_{i = 1}^{k(n) - 1} \left|\log\left(D_{i}(n)/W_{(i)}(n) \right)\right| + \sqrt{k(n)}\left|\log\left(D_{k(n)}(n)/W_{(k(n))}(n) \right)  \right|.
\end{align*}
Note that for $x, y \geq 0$, if $x > y$
\begin{align}
\label{eq:log1}
\left|\log\left( \frac{x}{y} \right) \right| = \log\left( \frac{x}{y} \right) = \log\left( 1 + \frac{x - y}{y} \right) \leq& \frac{x - y}{y} = \frac{|x-y|}{x \wedge y}. 
\end{align}
On the other hand, if $y > x$
\begin{align}
\label{eq:log2}
\left|\log\left( \frac{x}{y} \right) \right| = \left|-\log\left( \frac{y}{x} \right) \right| = \log\left( \frac{y}{x} \right) = \log\left( 1 + \frac{y - x}{x} \right) \leq \frac{y-x}{x} = \frac{|x-y|}{x \wedge y}.
\end{align}
Hence on $S(n) \cap C(n) \cap M(n)$ we may further bound
\begin{align*}
\sqrt{k(n)}&\left|\hat{\gamma}^\text{Hill}_D(n) - \hat{\gamma}^\text{Hill}_W(n) \right| \\ 
\leq& \frac{\sqrt{k(n)}}{k(n) - 1}\sum_{i = 1}^{k(n) - 1} \frac{\left|D_{i}(n) - W_{(i)}(n) \right|}{D_{i}(n) \wedge W_{(i)}(n)} +  \sqrt{k(n)} \frac{\left|D_{k(n)}(n) - W_{(k(n))}(n) \right|}{D_{k(n)}(n) \wedge W_{(k(n))}(n)},
\intertext{and on $C(n)$ specifically, we have that $\left|D_{i}(n) - W_{(i)}(n) \right| \leq \sqrt{5\log(n)  W_{(i)}(n)}$ so that
} 
\leq&  \frac{\sqrt{k(n)}}{k(n) - 1}\sum_{i = 1}^{k(n) - 1} \frac{\sqrt{5\log(n)  W_{(i)}(n)}}{W_{(i)}(n) - \sqrt{5\log(n)  W_{(i)}(n)}} \\
&+  \sqrt{k(n)} \frac{\sqrt{5\log(n)  W_{(k(n))}(n)}}{W_{(k(n))}(n) - \sqrt{5\log(n)  W_{(k(n))}(n)}} \\
=&  \frac{\sqrt{k(n)}}{k(n) - 1}\sum_{i = 1}^{k(n) - 1} \frac{1}{\sqrt{\frac{W_{(i)}(n)}{5\log(n)}} -1} + \sqrt{k(n)} \frac{1}{\sqrt{\frac{W_{(k(n))}(n)}{5\log(n)}} -1} \\
\leq& 2\sqrt{k(n)} \frac{1}{\sqrt{\frac{W_{(k(n))}(n)}{5\log(n)}} -1}.
\end{align*}
In summary, we have achieved that
\begin{align*}
\mathbb{P}\left(\sqrt{k(n)}\left|\hat{\gamma}^\text{Hill}_D(n) - \hat{\gamma}^\text{Hill}_W(n) \right| > \epsilon, S(n), C(n), M(n)  \right) \leq& \mathbb{P}\left(2\sqrt{k(n)} \frac{1}{\sqrt{\frac{W_{(k(n))}(n)}{5\log(n)}} -1}  > \epsilon \right) \\
=& \mathbb{P}\left(\sqrt{\frac{W_{(k(n))}(n)}{5k(n)\log(n)}} -\frac{1}{\sqrt{k(n)}}  < 2\epsilon^{-1} \right),
\end{align*}
Hence it suffices that $W_{(k(n))}(n)/k(n)\log(n) \xrightarrow{p} \infty$. Since $W_{(k(n))}(n)/U(n/k(n)) \xrightarrow{p} 1$ \citep[see Theorem 4.2 of][]{resnick2007heavy}, it suffices that $U(n/k(n))/k(n)\log(n) \rightarrow \infty$. Using \eqref{eq:Ulower}, this is easily seen to hold under growth rate restrictions on $k(n)$. 

We now follow a similar strategy for the Pickands estimator. Refine $S(n)$, $C(n)$ and $M(n)$ by employing the upper $4k(n)$ order statistics instead of $k(n)$. Again, on $S(n) \cap C(n) \cap M(n)$, $\allowbreak \left(D_{(1)}(n), \dots, D_{(4k(n))}(n)\right) = \left(D_{1}(n), \dots, D_{4k(n)}(n)\right)$ so that
\begin{align*}
\sqrt{k(n)}&\left|\hat{\gamma}^\text{Pick}_D(n) - \hat{\gamma}^\text{Pick}_W(n) \right| \\
\leq&  \frac{\sqrt{k(n)}}{\log 2}\left| \log\left(\frac{D_{k(n)}(n) - D_{2k(n)}(n)}{W_{(k(n))}(n) - W_{(2k(n))}(n)}\right)  \right| +  \frac{\sqrt{k(n)}}{\log 2}\left| \log\left(\frac{D_{2k(n)}(n) - D_{4k(n)}(n)}{W_{(2k(n))}(n) - W_{(4k(n))}(n)}\right)  \right|.
\end{align*}
Further, using \eqref{eq:log1} and \eqref{eq:log2}
\begin{align*}
\left| \log\left(\frac{D_{k(n)}(n) - D_{2k(n)}(n)}{W_{(k(n))}(n) - W_{(2k(n))}(n)}\right) \right| \leq& \frac{\left|D_{k(n)}(n) -  W_{(k(n))}(n)\right| + \left| D_{2k(n)}(n) -  W_{(2k(n))}(n) \right|}{\left(D_{k(n)}(n) - D_{2k(n)}(n) \right)\wedge\left(W_{(k(n))}(n) - W_{(2k(n))}(n) \right)}, \\
\intertext{and on $C(n)$ we have that} 
\leq& \frac{2 \sqrt{5\log(n) W_{(k(n))}(n)}}{W_{(k(n))}(n) - W_{(2k(n))}(n) - 2 \sqrt{5\log(n) W_{(k(n))}(n)}}.
\end{align*}
In addition, on $S(n)$
\begin{align*}
W_{(k(n))}(n) - W_{(2k(n))}(n) = \sum_{i = k(n)}^{2k(n) - 1}\left( W_{(i)}(n) - W_{(i + 1)}(n)\right) \geq 2k(n) \sqrt{5 \log(n) W_{(2k(n))}(n)}, 
\end{align*}
so that
\begin{align*}
\left| \log\left(\frac{D_{k(n)}(n) - D_{2k(n)}(n)}{W_{(k(n))}(n) - W_{(2k(n))}(n)}\right) \right| \leq& \frac{\sqrt{W_{(k(n))}(n)/W_{(2k(n))}(n)}}{k(n) -  \sqrt{W_{(k(n))}(n)/W_{(2k(n))}(n)}} \equiv J_1(n).
\end{align*}
In a similar fashion, it can be shown that on $S(n) \cap C(n) \cap M(n)$,
\begin{align*}
\left| \log\left(\frac{D_{2k(n)}(n) - D_{4k(n)}(n)}{W_{(2k(n))}(n) - W_{(4k(n))}(n)}\right) \right| \leq& \frac{\sqrt{W_{(2k(n))}(n)/W_{(4k(n))}(n)}}{2k(n) -  \sqrt{W_{(2k(n))}(n)/W_{(4k(n))}(n)}} \equiv J_2(n).
\end{align*}
Hence, in summary, we have shown that for the Pickands estimator
\begin{align*}
\mathbb{P}&\left(\sqrt{k(n)}\left|\hat{\gamma}^\text{Pick}_D(n) - \hat{\gamma}^\text{Pick}_W(n) \right| > \epsilon, S(n), C(n), M(n)  \right) \leq \mathbb{P}\left(\sqrt{k(n)}(J_1(n) + J_2(n))/\log 2 > \epsilon  \right).
\end{align*}
It is straightforwardly seen that $W_{(k(n))}(n)/W_{(2k(n))}(n) \xrightarrow{p} 2^{\gamma}$ as $n \rightarrow \infty$ so that $\sqrt{k(n)}J_1(n), \sqrt{k(n)}J_2(n) \xrightarrow{p} 0$ and the proof is complete.
\end{proof}

We may now prove the main result regarding the Hill and Pickands estimator, Theorem \ref{thm:hill}. 

\begin{proof}[Proof of Theorem \ref{thm:hill}]
For $\ell \in \{\text{Hill}, \text{Pick} \}$, simply rewrite 
\begin{align*}
\sqrt{k(n)}\left(\hat{\gamma}^\ell_D(n) - \gamma\right) = \sqrt{k(n)}\left(\hat{\gamma}^\ell_D(n) - \hat{\gamma}^\ell_W(n)\right)  + \sqrt{k(n)}\left(\hat{\gamma}^\ell_W(n) - \gamma\right).
\end{align*}
The first converges in probability to zero by Lemma \ref{lem:hill_approx}, while the second is asymptotically normal by Theorems 3.2.5 and 3.3.5 of \cite{haan2006extreme}. Hence, we may apply Slutsky's Theorem to achieve the result. 
\end{proof}

The approximation of the probability-weighted moment estimator requires a slightly different proof strategy. Instead of approximating the degree-based estimator directly, we approximate the probability-weighted moment estimates. We may then apply standard techniques to achieve asymptotic normality of the degree-based PWM estimator.

\begin{lemma}
\label{lem:pwm_approx}
Assume that $U$ satisfies \eqref{eq:rv} and \eqref{eq:Ulower}. Assume that the network is generated under either the Norros-Reittu or Chung-Lu model with $\alpha > 2$. Let $k(n)$ be such that $k(n) \rightarrow \infty$, $k(n) \log^{\alpha}(n) /n^{\frac{1}{4\alpha + 1}} \rightarrow 0$ as $n \rightarrow \infty$. Then for $q \in \{1, 2\}$
\begin{align*}
\sqrt{k(n)}\left| \frac{\hat{I}^{(q)}_D(n)}{a(n/k(n))} - \frac{\hat{I}^{(q)}_W(n)}{a(n/k(n))} \right| \xrightarrow{p} 0,
\end{align*}
as $n \rightarrow \infty$. 
\end{lemma}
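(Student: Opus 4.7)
The plan is to mirror the strategy used in Lemma \ref{lem:hill_approx}: work on the event $S(n) \cap C(n) \cap M(n)$, on which Theorem \ref{thm:order} guarantees $D_{(i)}(n) = D_i(n)$ for every $i \in [k(n)]$, and whose complement is asymptotically negligible by Lemmas \ref{lem:w_repel}, \ref{lem:d_conc}, \ref{lem:CL_conc} and \ref{lem:M}. Under the alignment of upper order statistics, the difference of moments rewrites as
\begin{align*}
\hat{I}^{(q)}_{D}(n) - \hat{I}^{(q)}_{W}(n) = \frac{1}{k(n)-1}\sum_{i=1}^{k(n)-1}\left(\frac{i}{k(n)-1}\right)^{q-1}\left\{\bigl(D_i(n) - W_{(i)}(n)\bigr) - \bigl(D_{k(n)}(n) - W_{(k(n))}(n)\bigr)\right\},
\end{align*}
so the problem reduces to bounding a weighted sum of the deviations $|D_i(n) - W_{(i)}(n)|$, each of which is controlled on $C(n)$ by $\sqrt{5 \log(n)\, W_{(i)}(n)}$. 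The contribution from the constant-in-$i$ correction $D_{k(n)}(n) - W_{(k(n))}(n)$ factors out of the sum, leaving the Riemann-sum-like factor $\tfrac{1}{k(n)-1}\sum_{i=1}^{k(n)-1}(i/(k(n)-1))^{q-1} = O(1)$, which causes no difficulty.

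Next I would establish a uniform-in-$i$ high-probability upper bound $W_{(i)}(n) \leq c\, U(n/i)$ for all $i \in [k(n)]$, by applying Lemma \ref{lem:bounds} with a fixed $\epsilon > 0$ and unioning over $i$; the exponential decay in $i$ absorbs the factor of $k(n)$. Combined with the regular variation \eqref{eq:rv}, this yields $W_{(i)}(n) \lesssim U(n/k(n)) (k(n)/i)^{\gamma}$, so that after dividing by $a(n/k(n)) = \gamma U(n/k(n))$ and multiplying by $\sqrt{k(n)}$, the target quantity is bounded on the good event by a constant multiple of
\begin{align*}
\sqrt{\frac{k(n) \log(n)}{U(n/k(n))}} \cdot \frac{1}{k(n)-1}\sum_{i=1}^{k(n)-1}\left(\frac{i}{k(n)-1}\right)^{q-1}\left(\frac{k(n)}{i}\right)^{\gamma/2},
\end{align*}
plus a matching term from the correction. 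Since $\gamma < 1/2$ when $\alpha > 2$, integral comparison shows the inner sum is $O(1)$ for both $q = 1, 2$, so the whole expression is of order $\sqrt{k(n) \log(n)/U(n/k(n))}$.

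Finally, \eqref{eq:Ulower} gives $U(n/k(n)) \gtrsim (n/k(n))^{\gamma}$, which reduces the needed convergence to $k(n)^{1+\gamma} \log(n)/n^{\gamma} \to 0$, equivalently $k(n) \log^{\alpha/(\alpha+1)}(n)/n^{1/(\alpha+1)} \to 0$. This is implied by the assumption $k(n) \log^{\alpha}(n)/n^{1/(4\alpha+1)} \to 0$, since $1/(4\alpha+1) < 1/(\alpha+1)$ and $\alpha/(\alpha+1) < \alpha$. The main obstacle will be the uniform upper bound on $W_{(i)}(n)$ and the careful tracking of slowly-varying factors through \eqref{eq:rv}; the order statistics for small $i$ are the most delicate, but the weight $(i/k(n))^{q-1} i^{-\gamma/2}$ integrates against $i$ without blowing up at the boundary because $\gamma/2 < 1$. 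Beyond this, the calculation parallels Lemma \ref{lem:hill_approx}, with the differences $D_i - D_{k(n)}$ and normalization by $a(n/k(n))$ playing the roles of ratios and logarithms.
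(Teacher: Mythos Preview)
Your proposal is correct and follows the same overall architecture as the paper's proof: restrict to the event $S(n)\cap C(n)\cap M(n)$, replace $D_{(i)}(n)$ by $D_i(n)$, and bound the difference via the concentration $|D_i(n)-W_{(i)}(n)|\le \sqrt{5\log(n)\,W_{(i)}(n)}$ available on $C(n)$. The divergence is only in how the resulting sum of $\sqrt{W_{(i)}(n)}$ is controlled. The paper takes the cruder route: it bounds every $W_{(i)}(n)$ by $W_{(1)}(n)$ and invokes the single extreme-value fact $W_{(1)}(n)/a(n)=O_p(1)$, arriving at the sufficient condition $\sqrt{k(n)\log(n)\,a(n)}/a(n/k(n))\to 0$. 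Your route is sharper: bounding each $W_{(i)}(n)$ by $c\,U(n/i)$ and using regular variation of $U$ gives the weaker sufficient condition $\sqrt{k(n)\log(n)/U(n/k(n))}\to 0$, an improvement of order $k(n)^{\gamma/2}$. Since both conditions are comfortably implied by the hypothesis on $k(n)$, the paper trades sharpness for brevity, avoiding your uniform-in-$i$ order-statistic bound and the Potter-type control of $U(n/i)/U(n/k(n))$ that you correctly flag as the main technical cost.

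One point worth tightening: applying Lemma~\ref{lem:bounds} with a \emph{fixed} $\epsilon$ and unioning over $i$ yields a failure probability bounded by $2\sum_{i\ge 1}e^{-i\epsilon/4}$, which is a constant rather than $o(1)$. What you actually obtain is $\max_{i\le k(n)} W_{(i)}(n)/U(n/i)=O_p(1)$ (take $\epsilon$ large to make the constant small), and this tightness statement is exactly what is needed to conclude convergence in probability; just phrase it as an $O_p(1)$ argument rather than a single ``high-probability'' event.
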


\begin{proof}
It suffices to prove that as $n \rightarrow \infty$
\begin{align*}
\mathbb{P}\left(\sqrt{k(n)}\left| \frac{\hat{I}^{(q)}_D(n)}{a(n/k(n))} - \frac{\hat{I}^{(q)}_W(n)}{a(n/k(n))} \right| > \epsilon, S(n), C(n), M(n)  \right) \rightarrow 0.
\end{align*}
since $P(S^c(n))$, $P(C^c(n))$ and $P(M^c(n))$ all tend towards zero as $n \rightarrow \infty$ by Lemmas \ref{lem:w_repel}, \ref{lem:d_conc}, \ref{lem:CL_conc} and \ref{lem:maxbound}. Of course on $S(n) \cap C(n) \cap M(n)$ we have 
\begin{align*}
\hat{I}^{(q)}_D(n) = \frac{1}{k(n) - 1} \sum_{i = 1}^{k(n) - 1}\left(\frac{i}{k(n) - 1}\right)^{q-1}\left(D_{i}(n) - D_{k(n)}(n) \right),
\end{align*}
so that
\begin{align*}
\sqrt{k(n)}\left| \frac{\hat{I}^{(q)}_D(n)}{a(n/k(n))}  - \frac{\hat{I}^{(q)}_D(n)}{a(n/k(n))} \right| \leq& \frac{\sqrt{k(n)}}{a(n/k(n))(k(n) - 1)} \sum_{i = 1}^{k(n) - 1}\left(\frac{i}{k(n) - 1}\right)^{q-1}\left|D_{i}(n) - W_{(i)}(n) \right| \\
&+ \frac{\sqrt{k(n)}}{a(n/k(n))(k(n) - 1)} \left|D_{k(n)}(n) - W_{(k(n))}(n) \right|  \sum_{i = 1}^{k(n) - 1}\left(\frac{i}{k(n) - 1}\right)^{q-1} \\
\leq& \frac{\sqrt{k(n)}}{a(n/k(n))(k(n) - 1)} \sum_{i = 1}^{k(n) - 1}\left|D_{i}(n) - W_{(i)}(n) \right| \\
&+ \frac{\sqrt{k(n)}}{a(n/k(n))} \left|D_{k(n)}(n) - W_{(k(n))}(n) \right|.
\end{align*}
Further, we may employ the concentration bound on $C(n)$ to achieve that
\begin{align*}
\sqrt{k(n)}\left|  \frac{\hat{I}^{(q)}_D(n)}{a(n/k(n))}  - \frac{\hat{I}^{(q)}_D(n)}{a(n/k(n))} \right| \leq& \frac{\sqrt{5k(n)\log(n)}}{a(n/k(n))(k(n) - 1)} \sum_{i = 1}^{k(n) - 1}\sqrt{W_{(i)}(n)}  \\
&+ \frac{\sqrt{5k(n)\log(n) W_{(k(n))}(n)}}{a(n/k(n))} \\
\leq&2 \frac{\sqrt{5k(n)\log(n) W_{(1)}(n)}}{a(n/k(n))}.
\end{align*}
In summary, we have that
\begin{align*}
\mathbb{P}\left(\sqrt{k(n)}\left| \frac{\hat{I}^{(q)}_D(n)}{a(n/k(n))} - \frac{\hat{I}^{(q)}_W(n)}{a(n/k(n))} \right| > \epsilon, S(n), C(n), M(n)  \right) \leq& \mathbb{P}\left( 2 \frac{\sqrt{5k(n)\log(n) W_{(1)}(n)}}{a(n/k(n))} > \epsilon \right).
\end{align*}
Since $\gamma W_{(1)}(n)/a(n) = O_p(1)$ it suffices that $\sqrt{k(n)a(n)\log(n)}/a(n/k(n)) = o(1)$ and using \eqref{eq:Ulower} it further suffices that $\sqrt{k(n)\log(n)a(n)}k^{1/\alpha}(n)/n^{1/\alpha} = o(1)$. This is easily achieved when $\alpha > 2$ and thus the proof is complete.
\end{proof}

We may now prove the main result regarding the PWM estimator, Theorem \ref{thm:pwm_norm}.

\begin{proof}[Proof of Theorem \ref{thm:pwm_norm}]
We may rewrite
\begin{align*}
\sqrt{k(n)}\left( 
\begin{pmatrix} 
\frac{\hat{I}^{(1)}_{W}(n)}{a(n/k(n))} \\ 
\frac{\hat{I}^{(2)}_{W}(n)}{a(n/k(n))} 
\end{pmatrix}
- 
\begin{pmatrix} 
\frac{1}{1 -\gamma} \\ 
\frac{1}{2(2 -\gamma)} 
\end{pmatrix}
\right) =& 
\sqrt{k(n)}\left( 
\begin{pmatrix} 
\frac{\hat{I}^{(1)}_{D}(n)}{a(n/k(n))} \\ 
\frac{\hat{I}^{(2)}_{D}(n)}{a(n/k(n))} 
\end{pmatrix}
- 
\begin{pmatrix} 
\frac{\hat{I}^{(1)}_{W}(n)}{a(n/k(n))} \\ 
\frac{\hat{I}^{(2)}_{W}(n)}{a(n/k(n))}
\end{pmatrix}
\right) 
\\
&+
\sqrt{k(n)}\left( 
\begin{pmatrix} 
\frac{\hat{I}^{(1)}_{W}(n)}{a(n/k(n))} \\ 
\frac{\hat{I}^{(2)}_{W}(n)}{a(n/k(n))} 
\end{pmatrix}
- 
\begin{pmatrix} 
\frac{1}{1 -\gamma} \\ 
\frac{1}{2(2 -\gamma)} 
\end{pmatrix}
\right) .
\end{align*}
Note that under the Euclidean norm, the first term converges in probability to zero as $n \rightarrow \infty$ by Lemma \ref{lem:pwm_approx}. The second term is asymptotically bivariate normal by Proposition 4.1 of \cite{de2024bootstrapping}. Hence, we may apply the converging together theorem to achieve that
\begin{align*}
\sqrt{k(n)}\left( 
\begin{pmatrix} 
\hat{I}^{(1)}_{D}(n)/a(n/k(n)) \\ 
\hat{I}^{(2)}_{D}(n)/a(n/k(n)) 
\end{pmatrix}
- 
\begin{pmatrix} 
\frac{1}{1 -\gamma} \\ 
\frac{1}{2(2 -\gamma)} 
\end{pmatrix}
\right) 
\Rightarrow 
N\left( 
\begin{pmatrix} 
0 \\ 
0 
\end{pmatrix}
,
\begin{pmatrix} 
\sigma_{11} & \sigma_{12} \\
\sigma_{21} & \sigma_{22}
\end{pmatrix}
\right),
\end{align*}
with covariance matrix as determined in \eqref{eq:pwms:norm}. Finally, applying the delta method gives the result.
\end{proof}

\section{Funding}

D. Cirkovic is supported by NSF Grant DMS-2210735. T. Wang is supported by the National Natural Science Foundation of China Grant 12301660 
and the Science and Technology Commission of Shanghai Municipality Grant 23JC1400700. 

\bibliographystyle{plain}
\bibliography{InhomogeneousMRV.bib}

\end{document}